\newcommand{\mfx}{\mathsf{OWF}(X)}
\newcommand{\ua}{\mathord{\uparrow}}
\newcommand{\da}{\mathord{\downarrow}}
\newcommand{\cl}{{\rm cl}}
\newcommand\twoheaduparrow{\mathord{\rotatebox[origin=c]{90}{$\twoheadrightarrow$}}}
\newcommand\twoheaddownarrow{\mathord{\rotatebox[origin=c]{90}{$\twoheadleftarrow$}}}
\newcommand{\dda}{\twoheaddownarrow}
\newcommand{\dua}{\twoheaduparrow}
\newcommand{\II}{\begin{enumerate}}
	\newcommand{\III}{\end{enumerate}}
\begin{document}
\begin{frontmatter}
  \title{Further Studies on  Open Well-filtered Spaces} 
  \author{Chong Shen\thanksref{a}\thanksref{ALL}\thanksref{myemail}}	
   \author{Xiaoyong Xi\thanksref{b}\thanksref{1coemail}}	
    \author{Dongsheng Zhao\thanksref{c}\thanksref{2coemail}}	
   \address[a]{School of Science\\ Beijing University of Posts and Telecommunications\\				
    Beijing, China}  							
  \thanks[ALL]{This work was supported by the National Natural Science Foundation of China (1210010153, 12071188) and Jiangsu Provincial Department of Education (21KJB110008).}   
   \thanks[myemail]{Email: \href{shenchong0520@163.com} {\texttt{\normalshape
       shenchong0520@163.com}}} 
  \address[b]{School of Mathematics and Statistics\\Yancheng Terchers University\\
    Yancheng, Jiangsu Province, China} 
  \thanks[1coemail]{Email:  \href{littlebrook@sina.com} {\texttt{\normalshape
        littlebrook@sina.com}}}
     \address[c]{Mathematics and Mathematics Education, National Institute of Education\\
     	Nanyang Technological University\\
    	1 Nanyang Walk, Singapore} 
    \thanks[2coemail]{Email:  \href{dongsheng.zhao@nie.edu.sg} {\texttt{\normalshape
    			dongsheng.zhao@nie.edu.sg}}}
\begin{abstract} 
 The open well-filtered spaces were introduced by Shen, Xi, Xu and Zhao to answer the problem whether every core-compact  well-filtered space is sober.
 In the current paper we explore further properties of open well-filtered spaces. One of the main results is that if a space  is open well-filtered,
 then so is its upper space (the set of all nonempty saturated compact subsets equipped with the upper Vietoris topology). Some other properties on open well-filtered spaces are also studied.
\end{abstract}
\begin{keyword}
well-filtered space, sober space, core-compact space, locally compact space, open well-filtered space
\end{keyword}
\end{frontmatter}
\section{Introduction}\label{intro}
The open well-filtered spaces were introduced in \cite{shen-xi-xu-zhao-2020} and used to give an alternative and more natural proof of the conjectured conclusion that every core compact and well-filtered space is sober. In \cite{shen-xi-xu-zhao-2020}, the authors  actually proved a stronger conclusion: every core-compact and open well-filtered space is sober, as every well-filtered space is open well-filtered and the converse conclusion is not true in general. By \cite{xi-zhao-2017} and \cite{xu-xi-zhao-2021}, one  knows that  a space is well-filtered if and only if its upper space is well-filtered. It is thus natural to wonder  whether a similar result holds for open well-filteredness. In this paper we shall give a partial answer to this problem and prove that if a space is open well-filtered then so is its upper space. A number of  new results on open well-filtered spaces will also be presented here.

\section{Preliminaries}

We first recall  some basic definitions and results that will be used in the paper.

Let $P$ be a poset.
A nonempty subset $D$ of $P$ is \emph{directed} (resp., \emph{filtered}) if every two
elements of $D$ have an upper (resp., lower) bound in $D$.  A poset $P$ is  a
\emph{directed complete poset}, or a \emph{dcpo} for short, if for any
directed subset $D\subseteq P$, the supremum $\bigvee D$ exists.

For any subset $A$ of a poset $P$, 
$$\ua A=\{y\in P: \exists x\in A, x\leq y\}, {\mbox{ and } } \da A=\{y\in P:\exists x\in A, y\leq x\}.$$
In particular, for each $x\in X$, we write $\ua x=\ua\{x\}$ and  $\da x=\da \{x\}$.

For  $x, y\in P$,  $x$ is \emph{way-below} $y$, denoted by $x\ll  y$, if for any directed subset
$D$ of $P$ with $\bigvee D$ existing,
$y\leq\bigvee D$
implies $x\leq d$ for some $d\in D$.  Denote $\dua
x=\{y\in P:x\ll y\}$ and $\dda x=\{y\in P:y\ll x\}$. A poset $P$ is
\emph{continuous}, if for any $x\in P$, the set $\dda x$ is directed and
$x= \bigvee\dda x$. A continuous dcpo is also called a
\emph{domain}.

A subset $U$ of a poset $P$ is \emph{Scott open} if
(i) $U=\mathord{\uparrow}U$ and (ii) for any directed subset $D$ of $P$ with  $\bigvee D$ existing, $\bigvee D\in U$ implies $D\cap
U\neq\emptyset$. All Scott open subsets of $P$ form a topology on $P$,
called the \emph{Scott topology} on $P$ and
denote by $\sigma(P)$. The space $\Sigma P=(P,\sigma(P))$ is called the
\emph{Scott space} of $P$.

For a $T_0$ space $X$,
the specialization order $\leq$ on $X$ is defined by $x\leq y$ iff $x\in \cl_X(\{y\})$, where $\cl_X$ is
the  closure operator of $X$. Clearly, $\cl_X(\{y\})=\da y$.

In  what follows, if no otherwise specified, the partial order on a $T_0$ space will mean the specialization order.

\begin{remark}\cite{redbook,goubault}\label{re}
	For each poset $(P,\leq_P)$, the specialization order on $\Sigma P$  coincides with  $\leq_P$.
\end{remark}

For any  $T_0$ space $X$, we shall often use $\mathcal O(X)$ to denote the topology of $X$ (the collection of all open sets of $X$).  For any subset $A$ of  $X$, the  \emph{saturation} of $A$, denoted by $Sat_X(A)$, is defined by
$$ Sat_X(A)=\bigcap\{U\in \mathcal O(X): A\subseteq U\},$$
or equivalently, $Sat_X(A)=\ua A$ with respect to the specialization order (see \cite[Proposition 4.2.9]{goubault}).
A subset $A$ of  $X$ is \emph{saturated} if $A=Sat_X(A)$.

For any $U,V\in\mathcal O(X)$, we write $U\ll V$ for that $U$ is way-below $V$ in the poset $(\mathcal O(X),\subseteq)$. Using a similar proof to that of the Alexander's Subbase Lemma (see \cite[Theorem 4.4.29]{goubault}), we can obtain the following result.

\begin{lemma}\label{lem01}
	Let $X$ be a $T_0$ space, $\mathcal S$ be a subbase for $\mathcal O(X)$, and $U,V\in\mathcal O(X)$. Then $U\ll V$ if and only if one can extract a finite subcover of $U$ from any cover $\{U_i:i\in I\}\subseteq\mathcal S$ of $V$.
\end{lemma}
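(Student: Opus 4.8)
The plan is to handle the two implications separately: the forward direction follows almost immediately from the definition of the way-below relation in $(\mathcal O(X),\subseteq)$, while the converse is the substantive part and will be obtained by transcribing the proof of Alexander's Subbase Lemma into a ``relative'' form that keeps track of covering $V$ versus covering $U$. Throughout I use that sups in $(\mathcal O(X),\subseteq)$ are just unions, so every directed family of opens has a supremum and the hypothesis of the definition of $\ll$ is automatic.

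First I would prove that $U\ll V$ implies the finite-extraction property. Given any cover $\{U_i:i\in I\}\subseteq\mathcal S$ of $V$, I form the family $\mathcal D$ of all finite unions $\bigcup_{i\in F}U_i$ with $F\subseteq I$ finite. This family is directed under $\subseteq$ and $\bigvee\mathcal D=\bigcup_{i\in I}U_i\supseteq V$, so the definition of $\ll$ yields a single member $\bigcup_{i\in F}U_i$ of $\mathcal D$ with $U\subseteq\bigcup_{i\in F}U_i$; this is exactly a finite subcover of $U$ extracted from $\{U_i:i\in I\}$.

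For the converse I would argue by contradiction. Assume the finite-extraction property, let $\mathcal D\subseteq\mathcal O(X)$ be directed with $V\subseteq\bigcup\mathcal D$, and suppose no single member of $\mathcal D$ contains $U$; by directedness no finite union of members of $\mathcal D$ contains $U$ either. I then consider the collection $\mathcal A$ of all families $\mathcal C\subseteq\mathcal O(X)$ that cover $V$ yet admit no finite subfamily whose union contains $U$. The family $\mathcal D$ witnesses $\mathcal A\neq\emptyset$, and the union of any $\subseteq$-chain in $\mathcal A$ again lies in $\mathcal A$ (a finite subfamily of the union would already sit inside one member of the chain), so Zorn's Lemma produces a maximal $\mathcal M\in\mathcal A$.

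The crux, which I expect to be the main obstacle, is to show that $\mathcal M\cap\mathcal S$ still covers $V$, since this is where the subbase hypothesis must be married to the maximality of $\mathcal M$ without conflating the two roles of $U$ and $V$. Fixing $x\in V$, I pick $M\in\mathcal M$ with $x\in M$ and, using that $\mathcal S$ is a subbase, subbasic opens $S_1,\dots,S_n\in\mathcal S$ with $x\in S_1\cap\cdots\cap S_n\subseteq M$. If no $S_i$ belonged to $\mathcal M$, then by maximality each $\mathcal M\cup\{S_i\}$ would finitely cover $U$, say $U\subseteq S_i\cup\bigcup\mathcal F_i$ with $\mathcal F_i\subseteq\mathcal M$ finite; combining these through $U\setminus(S_1\cap\cdots\cap S_n)=\bigcup_{i=1}^n(U\setminus S_i)$ together with $S_1\cap\cdots\cap S_n\subseteq M$ gives $U\subseteq M\cup\bigcup_{i=1}^n\bigcup\mathcal F_i$, a finite subcover of $U$ from $\mathcal M$, contradicting $\mathcal M\in\mathcal A$. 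Hence some $S_i\in\mathcal M\cap\mathcal S$ contains $x$, so $\mathcal M\cap\mathcal S$ covers $V$. This is a subbasic cover of $V$, so the finite-extraction hypothesis supplies a finite subfamily of $\mathcal M\cap\mathcal S\subseteq\mathcal M$ whose union contains $U$, again contradicting $\mathcal M\in\mathcal A$. The resulting contradiction forces some $D\in\mathcal D$ with $U\subseteq D$, which is precisely $U\ll V$.
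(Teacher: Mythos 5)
Your proof is correct and is exactly what the paper intends: the paper gives no explicit argument for this lemma, saying only that it follows ``using a similar proof to that of the Alexander's Subbase Lemma,'' and your Zorn's-lemma construction of a maximal family covering $V$ that admits no finite subfamily covering $U$ is precisely that relative adaptation, with both directions handled soundly. The only point you gloss is the degenerate case where the basic neighborhood $S_1\cap\cdots\cap S_n\subseteq M$ of $x$ is the empty intersection ($n=0$, i.e.\ $M=X$), but then $U\subseteq M$ already contradicts $\mathcal M\in\mathcal A$ directly, so the argument stands.
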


\begin{definition}\cite{shen-xi-xu-zhao-2020}
	Let $X$ be a $T_0$ space.
	\begin{enumerate}	
		\item[(1)] A subfamily $\mathcal F\subseteq \mathcal O(X)$ is called \emph{$\ll$-filtered}, denoted by $\mathcal F\subseteq_{flt}\mathcal O(X)$, if for any  $U_1,U_2\in\mathcal F$, there exists $U_3\in\mathcal F$ such that
		$U_3\ll U_1$ and $U_3\ll U_2$.
		
		\item [(2)] $X$ is called \emph{open well-filtered} if for each $\ll$-filtered family $\mathcal F\subseteq \mathcal O(X)$  and $U\in\mathcal O(X)$, $\bigcap\mathcal F\subseteq U$ implies that $V\subseteq U$ for some $V\in\mathcal F.$
	\end{enumerate}
\end{definition}

\begin{proposition}\cite{shen-xi-xu-zhao-2020}\label{prop2}
	If $X$ is an open well-filtered space and $\{U_i:i\in I\}$ is a $\ll$-filtered family of nonempty open sets, then $\bigcap\{U_i: i\in I\}$ is a nonempty compact saturated set.
\end{proposition}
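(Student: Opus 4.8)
The plan is to establish the three assertions—saturatedness, nonemptiness, and compactness—one at a time; throughout I write $K=\bigcap\{U_i:i\in I\}$ and $\mathcal F=\{U_i:i\in I\}$. Saturatedness will be immediate: each open set is upward closed in the specialization order, hence saturated, and an arbitrary intersection of saturated sets is saturated, so $K=Sat_X(K)$. For nonemptiness I would argue by contradiction. If $K=\emptyset$, then $\bigcap\mathcal F\subseteq\emptyset$ with $\emptyset\in\mathcal O(X)$; since $X$ is open well-filtered and $\mathcal F$ is $\ll$-filtered, the definition yields some $U_i\in\mathcal F$ with $U_i\subseteq\emptyset$, i.e.\ $U_i=\emptyset$, contradicting the assumption that every member of $\mathcal F$ is nonempty.

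The real content is compactness, and this is where the $\ll$-filtered structure does the work. Given an open cover $\{V_j:j\in J\}$ of $K$, set $V=\bigcup_{j\in J}V_j$, so that $K=\bigcap\mathcal F\subseteq V$. Open well-filteredness then supplies some $U_{i_0}\in\mathcal F$ with $U_{i_0}\subseteq V$. Next I would invoke $\ll$-filteredness on the pair $U_{i_0},U_{i_0}$ to obtain $U_{i_1}\in\mathcal F$ with $U_{i_1}\ll U_{i_0}$. Since $U_{i_0}\subseteq V$, the family $\{V_j:j\in J\}$ is also an open cover of $U_{i_0}$, so by Lemma~\ref{lem01} (with the full topology $\mathcal O(X)$ taken as the subbase) one can extract a finite subcover $V_{j_1},\dots,V_{j_n}$ of $U_{i_1}$, giving $U_{i_1}\subseteq V_{j_1}\cup\cdots\cup V_{j_n}$. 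Finally, $U_{i_1}\in\mathcal F$ forces $K=\bigcap\mathcal F\subseteq U_{i_1}$, and hence $K\subseteq V_{j_1}\cup\cdots\cup V_{j_n}$, the desired finite subcover.

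The hard part will be precisely this compactness argument, specifically the idea of descending within $\mathcal F$ along the way-below relation. Open well-filteredness on its own only produces a single member $U_{i_0}$ lying below the union $V$, and $U_{i_0}$ need not itself be compact, so the decisive move is to pass to some $U_{i_1}\ll U_{i_0}$ inside $\mathcal F$ and then exploit that a way-below open set is finitely covered by any open cover of its target (Lemma~\ref{lem01}), together with the fact that $K$ lies below every member of $\mathcal F$. Everything else is routine once this chain $K\subseteq U_{i_1}\ll U_{i_0}\subseteq V$ is in place.
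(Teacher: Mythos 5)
Your proof is correct, and it is essentially the standard argument: saturatedness and nonemptiness are dispatched as they must be, and the compactness step via the chain $K\subseteq U_{i_1}\ll U_{i_0}\subseteq\bigcup_{j}V_j$ (obtaining $U_{i_1}$ by applying $\ll$-filteredness to the pair $U_{i_0},U_{i_0}$, then extracting a finite subcover of $U_{i_1}$ from the cover of $U_{i_0}$) is exactly the intended mechanism. The present paper only cites this proposition from \cite{shen-xi-xu-zhao-2020} without reproducing a proof, and your argument coincides with the one given there.
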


\begin{definition}\cite{redbook,goubault}
	A $T_0$ space $X$ is called \emph{core-compact} if for each $x\in X$ and $U\in \mathcal O(X)$ such that $x\in U$, there exists $V\in\mathcal O(X)$ such that  $x\in V\ll U$.
\end{definition}

\begin{remark}\cite{redbook,goubault}
	A $T_0$ space $X$ is core-compact if and only if the poset  $(\mathcal O(X),\subseteq)$ is continuous.
\end{remark}

\begin{theorem}\cite{shen-xi-xu-zhao-2020}
	Every core-compact open well-filtered space is sober.
\end{theorem}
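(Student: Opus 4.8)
The plan is to verify sobriety directly: given a nonempty irreducible closed subset $A\subseteq X$, I will exhibit a (necessarily unique, by $T_0$) point $a$ with $A=\cl_X(\{a\})$. The starting object is the family $\mathcal F_A=\{U\in\mathcal O(X):U\cap A\neq\emptyset\}$ of open sets meeting $A$. Irreducibility of $A$ makes $\mathcal F_A$ a filter of $\mathcal O(X)$ (if $U_1,U_2$ both meet $A$, so does $U_1\cap U_2$), it is clearly upward closed, and a quick check shows it is Scott open in $(\mathcal O(X),\subseteq)$: a directed union meeting $A$ already has a member meeting $A$. Moreover every $U\in\mathcal F_A$ is nonempty.

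The crux of the argument --- and the step I expect to be the main obstacle --- is to upgrade $\mathcal F_A$ from a mere filter to a $\ll$-filtered family, which is exactly where core-compactness enters. Since $X$ is core-compact, $(\mathcal O(X),\subseteq)$ is continuous, so for $U_1,U_2\in\mathcal F_A$ I first form $U_1\cap U_2\in\mathcal F_A$ and then write it as the directed union of the open sets way-below it. Because $\mathcal F_A$ is Scott open, this directed union must already meet $\mathcal F_A$, yielding $W\in\mathcal F_A$ with $W\ll U_1\cap U_2$; since $U_1\cap U_2\subseteq U_1$ and $U_1\cap U_2\subseteq U_2$, monotonicity of $\ll$ gives $W\ll U_1$ and $W\ll U_2$. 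Hence $\mathcal F_A$ is $\ll$-filtered. I regard assembling continuity and Scott-openness in precisely this way as the delicate point.

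With $\mathcal F_A$ a $\ll$-filtered family of nonempty open sets, I invoke Proposition~\ref{prop2} to get that $K:=\bigcap\mathcal F_A$ is a nonempty compact saturated set, and then I use open well-filteredness to locate a point of $A$ inside $K$. Indeed, if $K\cap A=\emptyset$ then $K=\bigcap\mathcal F_A\subseteq X\setminus A$, so open well-filteredness (applied to the $\ll$-filtered $\mathcal F_A$ and the open set $X\setminus A$) produces some $V\in\mathcal F_A$ with $V\subseteq X\setminus A$, i.e. $V\cap A=\emptyset$, contradicting $V\in\mathcal F_A$. Therefore $K\cap A\neq\emptyset$.

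Finally I fix $a\in K\cap A$ and check $A=\cl_X(\{a\})$. Since $a\in A$ and $A$ is closed, $\cl_X(\{a\})=\da a\subseteq A$. For the reverse inclusion, take any $b\in A$ and any open $U$ with $b\in U$; then $U\cap A\neq\emptyset$, so $U\in\mathcal F_A$ and hence $a\in K\subseteq U$. As every open neighbourhood of $b$ contains $a$, we get $b\leq a$, so $b\in\da a=\cl_X(\{a\})$. Thus $A\subseteq\cl_X(\{a\})$, giving $A=\cl_X(\{a\})$; uniqueness of the generic point is immediate from the $T_0$ separation, which establishes sobriety.
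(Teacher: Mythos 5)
Your proof is correct and takes essentially the same route as the paper, which factors the argument through Proposition \ref{th} (in a core-compact space the family $\{U\in\mathcal O(X):U\cap A\neq\emptyset\}$ witnesses every irreducible set as an OWF-set) and Theorem \ref{th1} (in an open well-filtered space every closed OWF-set is the closure of a unique point); your argument is these two steps inlined, built on exactly the same family $\mathcal F_A$ and the same use of open well-filteredness to find a point of $\bigcap\mathcal F_A$ in $A$. The only cosmetic differences are that you derive $\ll$-filteredness from Scott-openness of $\mathcal F_A$ together with continuity of $(\mathcal O(X),\subseteq)$, where the paper applies core-compactness directly at a point of $A\cap U_1\cap U_2$, and that you verify $A\subseteq\cl_X(\{a\})$ by a direct neighbourhood argument rather than via minimality of the closed set meeting all members of the family.
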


\begin{definition}\cite{redbook,goubault} A $T_0$ space $X$ is called \emph{well-filtered} if for any filtered family $\{Q_i:i\in I\}$ of compact saturated subsets of $X$ and any open set $U\subseteq X$, $\bigcap_{i\in I}Q_i\subseteq U$ implies $Q_{i_0}\subseteq U$ for some $i_0\in I$.
\end{definition}

\begin{definition}
	\cite{redbook,goubault} A nonempty subset $A$ of a topological space $X$ is called \emph{irreducible} if for any closed sets $F_1$, $F_2$ of $X$, $A \subseteq F_1\cup F_2$ implies $A \subseteq F_1$ or $A \subseteq  F_2$.  A $T_0$ space $X$ is called \emph{sober}, if for any  irreducible closed set $F$ of $X$, there is a (unique) point $x\in X$ such that $F=\cl_X(\{x\})$.
\end{definition}

The following result on irreducible sets in product spaces will be used in the sequel.
\begin{proposition}
	\cite[Proposition 8.4.7]{goubault} \label{closure}
	Let	$\{X_i:i\in I\}$ be a family of topological spaces. The irreducible closed sets in $\prod_{i\in I}X_i$ are exactly the sets of the form
	$\prod_{i\in I}C_i$, where each $C_i$ is an irreducible closed set in $X_i$ ($i\in I$).
\end{proposition}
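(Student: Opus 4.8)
The plan is to prove the equality of the two classes by establishing both inclusions, after recording a convenient reformulation of irreducibility and two standard facts. Write $X=\prod_{i\in I}X_i$ and let $\pi_i\colon X\to X_i$ be the $i$-th projection. I will use throughout the equivalent description of irreducibility: a nonempty set $A$ is irreducible if and only if whenever finitely many open sets each meet $A$, their common intersection also meets $A$; moreover it suffices to test this on a base, since any open set meeting $A$ contains a basic open set meeting $A$. I will also use two facts that follow immediately from the closed-set definition together with continuity and the properties of closure: the continuous image of an irreducible set is irreducible, and the closure of an irreducible set is irreducible.

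First I would show that every product $\prod_{i\in I}C_i$ of irreducible closed sets is itself irreducible closed. It is closed because $\prod_{i\in I}C_i=\bigcap_{i\in I}\pi_i^{-1}(C_i)$ is an intersection of closed sets, and it is nonempty by the axiom of choice. For irreducibility I test two basic open sets $W=\bigcap_{j\in J}\pi_j^{-1}(U_j)$ and $W'=\bigcap_{k\in K}\pi_k^{-1}(V_k)$ (with $J,K$ finite) that both meet $\prod_{i}C_i$. The point is that $W$ meets $\prod_{i}C_i$ precisely when $U_j\cap C_j\neq\emptyset$ for every $j\in J$, and likewise for $W'$. I then build a point of $\prod_{i}C_i$ lying in $W\cap W'$ coordinatewise: for $i\in J\cap K$ the irreducibility of $C_i$ upgrades $U_i\cap C_i\neq\emptyset$ and $V_i\cap C_i\neq\emptyset$ to $U_i\cap V_i\cap C_i\neq\emptyset$; for $i$ in exactly one of $J,K$ I use the single nonemptiness condition; and for the remaining coordinates I use $C_i\neq\emptyset$. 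This yields the desired point, so $\prod_{i}C_i$ is irreducible.

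Conversely, let $A$ be an irreducible closed set in $X$ and put $C_i=\cl_{X_i}(\pi_i(A))$. Since $\pi_i$ is continuous, $\pi_i(A)$ is irreducible, hence so is its closure $C_i$; as $C_i$ is also closed, each $C_i$ is an irreducible closed set. The inclusion $A\subseteq\prod_{i}C_i$ is immediate since $\pi_i(x)\in\pi_i(A)\subseteq C_i$ for every $x\in A$. For the reverse inclusion I fix $x\in\prod_{i}C_i$ and a basic open neighborhood $W=\bigcap_{j\in J}\pi_j^{-1}(W_j)$ of $x$, and aim to show $W\cap A\neq\emptyset$; as $A$ is closed this gives $x\in\cl_X(A)=A$. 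For each $j\in J$ the coordinate $x_j$ lies in $C_j=\cl_{X_j}(\pi_j(A))$, so the neighborhood $W_j$ of $x_j$ meets $\pi_j(A)$, which means $\pi_j^{-1}(W_j)$ meets $A$. Thus the finitely many open sets $\pi_j^{-1}(W_j)$, $j\in J$, each meet $A$, and irreducibility of $A$ forces their intersection $W$ to meet $A$.

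I expect the main obstacle to be this final step of the converse: passing from the coordinatewise data ``$W_j$ meets $\pi_j(A)$'' to an honest point of $A$ inside the full basic neighborhood $W$. This is exactly where irreducibility of $A$, in its finite-intersection form, together with the closedness of $A$, is essential; without irreducibility the product $\prod_{i}C_i$ can be strictly larger than $A$. The first inclusion is by comparison a routine check on basic open sets, its only subtlety being the appeal to the factors' irreducibility on the overlap $J\cap K$.
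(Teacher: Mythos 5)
Your proof is correct. The paper itself contains no proof of this proposition---it is quoted directly from Goubault-Larrecq's book as Proposition 8.4.7---and your argument is essentially the standard one from that source: you establish both inclusions using the finite-intersection reformulation of irreducibility (equivalently, the open-set form of the closed-set definition), tested on basic open sets of the product, and in the converse direction you recover the irreducible closed set $A$ as $\prod_{i\in I}\cl(\pi_i(A))$ by combining irreducibility of $A$ with its closedness, so nothing further is needed.
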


The relations among well-filtered spaces, open well-filtered spaces and sober spaces are shown below:
$$\xymatrix{
	&\mbox{sober}\ar@{->}[ld]&\\
	\mbox{well-filtered} \ar@{->}[rr]&&\mbox{open well-filtered }\ar@{->}[ul]_{\mbox{core-compact}}
}$$

\vspace{0.2cm}

The following lemma will be used in the sequel.
\begin{lemma} 
	\cite[Lemma 2.4]{shen-xi-xu-zhao-2020} \label{llrudin}
	Let $X$ be a  $T_0$ space, $\{U_i:i\in I\}$ be a $\ll$-filtered family of open sets of $X$, and $F$ be a closed set of $X$. If  $F\cap U_i\not=\emptyset$ for all $i\in I$, then there is a minimal closed set $F_0\subseteq F$ such that $F_0\cap U_i\not=\emptyset$ for all $i\in I$. In addition, this $F_0$ is irreducible.
\end{lemma}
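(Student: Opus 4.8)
The plan is to prove the two assertions in turn: first the existence of a minimal closed subset $F_0\subseteq F$ meeting every $U_i$, obtained by Zorn's Lemma, and then the irreducibility of any such minimal set, obtained by a direct argument that plays the minimality of $F_0$ against the $\ll$-filteredness of $\{U_i:i\in I\}$.

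For the existence, I would work with the collection
$$\mathcal C=\{C\subseteq F: C\ \text{is closed and}\ C\cap U_i\neq\emptyset\ \text{for all}\ i\in I\},$$
partially ordered by inclusion. It is nonempty, since $F\in\mathcal C$ by hypothesis, and a minimal element of $\mathcal C$ is exactly the desired $F_0$. To apply Zorn's Lemma in the form that produces minimal elements, it suffices to show that every nonempty chain $\{C_j:j\in J\}\subseteq\mathcal C$ has a lower bound in $\mathcal C$. The natural candidate is the closed set $C=\bigcap_{j\in J}C_j$, which is automatically contained in $F$, so the only nontrivial point is that $C\cap U_i\neq\emptyset$ for every $i$.

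This verification is the heart of the proof and the step I expect to be the main obstacle, because a decreasing chain of closed sets each meeting a fixed open set can easily have an intersection that misses it; the $\ll$-filteredness is precisely the extra ingredient that rules this out. Fix $i$ and suppose, for contradiction, that $C\cap U_i=\emptyset$. Then $U_i\subseteq X\setminus C=\bigcup_{j\in J}(X\setminus C_j)$, and since $\{C_j\}$ is a chain the open sets $\{X\setminus C_j:j\in J\}$ form a directed family. Because $\{U_i:i\in I\}$ is $\ll$-filtered, taking $U_1=U_2=U_i$ in the definition yields some $U_{i'}$ in the family with $U_{i'}\ll U_i$. Applying the way-below relation to the directed union $U_i\subseteq\bigcup_{j}(X\setminus C_j)$ gives $U_{i'}\subseteq X\setminus C_{j_0}$ for some $j_0\in J$, that is, $C_{j_0}\cap U_{i'}=\emptyset$. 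This contradicts $C_{j_0}\in\mathcal C$, which forces $C_{j_0}\cap U_{i'}\neq\emptyset$. Hence $C\in\mathcal C$, Zorn's Lemma applies, and a minimal $F_0$ exists.

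For the irreducibility I would first note that $F_0\neq\emptyset$, since it meets each $U_i$ and the family is nonempty. Now suppose $F_0=A\cup B$ with $A,B$ closed and, aiming at a contradiction, both $A\subsetneq F_0$ and $B\subsetneq F_0$. By minimality of $F_0$, neither $A$ nor $B$ belongs to $\mathcal C$, so there are indices with $A\cap U_{i_1}=\emptyset$ and $B\cap U_{i_2}=\emptyset$. Using $\ll$-filteredness once more, choose $U_{i_3}$ with $U_{i_3}\ll U_{i_1}$ and $U_{i_3}\ll U_{i_2}$; since $\ll$ implies $\subseteq$, we get $U_{i_3}\subseteq U_{i_1}\cap U_{i_2}$, and therefore $F_0\cap U_{i_3}=(A\cup B)\cap U_{i_3}\subseteq(A\cap U_{i_1})\cup(B\cap U_{i_2})=\emptyset$, contradicting $F_0\in\mathcal C$. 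Thus $F_0=A$ or $F_0=B$, so $F_0$ is irreducible. Note that this last step uses only the downward $\subseteq$-directedness entailed by $\ll$-filteredness, whereas the full strength of the way-below relation was indispensable in the existence step.
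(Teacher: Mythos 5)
Your proof is correct and is essentially the argument behind the cited result: the paper itself states this lemma without proof, quoting \cite[Lemma 2.4]{shen-xi-xu-zhao-2020}, and the proof there is exactly your Rudin-style scheme — Zorn's Lemma on the closed subsets of $F$ meeting every $U_i$, with a chain's intersection handled by picking $U_{i'}\ll U_i$ and applying the way-below relation to the directed family of complements $\{X\setminus C_j\}$, followed by the same two-index filteredness argument for irreducibility. Your closing observation correctly isolates where the full strength of $\ll$ is needed (the chain step) versus where mere lower bounds in $\subseteq$ suffice (irreducibility).
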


\section{A new characterization for open well-filtered spaces}

In \cite{shenxixuzhao2019,Xu-Shen-Xi-Zhao},  a characterization for  well-filtered spaces by means of KF-sets is obtained.  We now prove a similar characterization for open well-filtered spaces.

\begin{definition}
	A nonempty subset  $A$  of a $T_0$ space $X$ is called an \emph{open well-filtered set}, or \emph{OWF-set} for short, if there exists  a $\ll$-filtered family $\{U_i:i\in I\}\subseteq \mathcal O(X)$ such that $\cl(A)$ is a minimal closed set that intersects all $U_i$, $i\in I$.
	
	Denote by $\mathsf{OWF}(X)$ the set of all closed OWF-subsets of $X$.	
\end{definition}

\begin{remark}\label{re32}
	\begin{enumerate}
		\item [(1)] A subset of a topological space is an OWF-set if and only if  its closure is an OWF-set.
		\item [(2)] By Lemma \ref{llrudin}, every OWF-set is irreducible.
	\end{enumerate}	
\end{remark}

\begin{theorem}\label{th1}
	Let $X$ be a  $T_0$ space. Then the following statements are equivalent:
	\begin{enumerate}
		\item [\rm (1)] $X$ is open well-filtered;
		\item [\rm (2)] $\forall A\in \mfx$, there exists a unique $x\in X$ such that $A=\cl(\{x\})$.
	\end{enumerate}
\end{theorem}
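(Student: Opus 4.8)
The plan is to establish the two implications separately, exploiting throughout the elementary fact that for an open (hence saturated, i.e.\ upward closed) set $U$ and a point $x$ one has $\cl(\{x\})\cap U\neq\emptyset$ if and only if $x\in U$: indeed $\cl(\{x\})=\da x$, so a point of $\cl(\{x\})\cap U$ is some $y\leq x$ with $y\in U$, and upward closedness of $U$ forces $x\in U$, while the converse is immediate. This translation is what lets minimality of a closed set be converted into an equation of the form $A=\cl(\{x\})$.

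For $(1)\Rightarrow(2)$, I would take $A\in\mfx$ together with a witnessing $\ll$-filtered family $\{U_i:i\in I\}$, so that $A$ is a minimal closed set meeting every $U_i$. The crucial step is to show $A\cap\bigcap_{i\in I}U_i\neq\emptyset$. Suppose not; then $\bigcap_{i\in I}U_i\subseteq X\setminus A$, and since $X\setminus A$ is open and $X$ is open well-filtered, there is $j\in I$ with $U_j\subseteq X\setminus A$, i.e.\ $A\cap U_j=\emptyset$, contradicting that $A$ meets every $U_i$. Hence there is $x\in A\cap\bigcap_{i\in I}U_i$. By the fact above $\cl(\{x\})$ meets every $U_i$, and $\cl(\{x\})\subseteq A$ because $x\in A$ and $A$ is closed; minimality of $A$ then forces $A=\cl(\{x\})$. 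Uniqueness of $x$ is automatic since $X$ is $T_0$.

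For $(2)\Rightarrow(1)$, I would argue by contradiction. Let $\{U_i:i\in I\}$ be $\ll$-filtered and $U\in\mathcal O(X)$ with $\bigcap_{i\in I}U_i\subseteq U$, and suppose $U_j\not\subseteq U$ for every $j\in I$. Writing $F=X\setminus U$, this says $F\cap U_i\neq\emptyset$ for all $i$. By Lemma \ref{llrudin} there is a minimal closed set $F_0\subseteq F$ meeting every $U_i$, so by definition $F_0\in\mfx$. Applying hypothesis $(2)$ gives a point $x$ with $F_0=\cl(\{x\})$. Since $\cl(\{x\})$ meets every $U_i$, the fact above yields $x\in U_i$ for all $i$, hence $x\in\bigcap_{i\in I}U_i\subseteq U$; but $x\in F_0\subseteq F=X\setminus U$, a contradiction. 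Therefore some $U_j\subseteq U$, and $X$ is open well-filtered.

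I expect the argument to be largely routine once the right objects are identified. The only genuine external input is Lemma \ref{llrudin}, which manufactures the minimal closed (and irreducible) set $F_0$ in the direction $(2)\Rightarrow(1)$ and is exactly what certifies $F_0$ as an OWF-set; without it one cannot pass from a family that merely meets all the $U_i$ to a minimal such family. The main point to watch is keeping the specialization-order bookkeeping correct, namely the equivalence of ``$\cl(\{x\})$ meets $U_i$'' with ``$x\in U_i$'', since this is the hinge on which both directions turn.
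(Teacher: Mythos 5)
Your proof is correct and follows essentially the same route as the paper's: the same use of open well-filteredness to get a point of $A\cap\bigcap_{i\in I}U_i$ in $(1)\Rightarrow(2)$, and the same appeal to Lemma \ref{llrudin} plus the contradiction via $x\in\bigcap_{i\in I}U_i\subseteq U$ in $(2)\Rightarrow(1)$. The only difference is cosmetic: you spell out two steps the paper leaves implicit, namely why $\bigcap_{i\in I}U_i\cap A\neq\emptyset$ follows from open well-filteredness and the specialization-order fact that $\cl(\{x\})$ meets an open set $U$ iff $x\in U$.
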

\begin{proof}
	(1) $\Rightarrow$ (2). Let $A\in \mfx$. Then there exists $\{U_i:i\in I\}\subseteq_{flt}\mathcal O(X)$ such that $A$ is a minimal closed set that intersects all $U_i$, $i\in I$. Since  $X$ is open well-filtered, it follows that $\bigcap_{i\in I}U_i\cap A\neq\emptyset$.
	Choose one $x\in \bigcap_{i\in I}U_i\cap A$. Then $\cl(\{x\})\subseteq A$, and it is a closed set that intersects all  $U_i$, $i\in I$. By  the minimality of $A$, we have that  $A=\cl(\{x\})$. The uniqueness of $x$ is determined by the $T_0$ separation of $X$.
	
	(2) $\Rightarrow$ (1). Let $\{U_i:i\in I\}\subseteq_{flt}\mathcal O(X)$ and $U\in\mathcal O(X)$ such that $\bigcap_{i\in I}U_i\subseteq U$.
	We need to show that $U_i\subseteq U$ for some $i\in I$. If, on the contrary, that $U_i\nsubseteq U$ for all $i\in I$, then 
	by Lemma \ref{llrudin}, there exists a minimal (irreducible) closed set $A\subseteq X\setminus U$ that intersects all $U_i$, $i\in I$. Thus $A\in \mfx$. By the  assumption, there exists a unique $x\in X$ such that $A=\cl(\{x\})$. For each $i\in I$, since $\cl(\{x\})\cap U_i=A\cap U_i\neq\emptyset$, it follows that $x\in U_i$, which implies that  $x\in\bigcap_{i\in I}U_i\subseteq U$. Thus $x\in U\cap A$, which contradicts $A\subseteq X\setminus U$. This contradiction completes the proof. 
\end{proof}

\begin{example}\label{e01}
	Let $\mathbb N^+$ be the set of all positive integers, $\mathbb N^+_{\rm cof}$ be the space of $\mathbb N^+$ with the co-finite topology (the open sets are $\emptyset$  and all the complements of finite subsets of $\mathbb N^+$), and let $\mathbb N^+_{\alpha}$ be the Alexandoff space of $\mathbb N^+$ (the open sets are the upper subsets of $\mathbb N^+$ with the usual order of numbers). We have the following claims:
	\begin{enumerate}
		\item [(c1)] It is trivial to check that
		each subset of $\mathbb N^+$ is compact  in both $\mathbb N^+_{\rm cof}$ and $\mathbb N^+_{\alpha}$. We then deduce that $U\ll V$ iff $U\subseteq V$ for any open sets $U,V$ in $\mathbb N^+_{\rm cof}$ or $\mathbb N^+_{\alpha}$.
		
		\item [(c2)] Neither $\mathbb N^+_{cof}$ nor $\mathbb N^+_{\alpha}$ is open well-filtered.
		
		For each $n\in\mathbb N^+$,  $U_n=\mathbb N^+\setminus\{1,2,3,\ldots,n\}$ is a nonempty open set in both $\mathbb N^+_{\rm cof}$ and $\mathbb N^+_{\alpha}$. From (c1), it follows that the family  $\{U_n:n\in\mathbb N^+\}$ is  $\ll$-filtered, but $\bigcap_{n\in\mathbb N^+}U_n=\emptyset$. By Proposition \ref{prop2}, we obtain (c2).
	\end{enumerate}
\end{example}

The following example shows that a saturated subspace of an open well-filtered space need not be open well-filtered.

\begin{example}\label{exm3}
	Let  $\mathbb J=\mathbb N^+\times (\mathbb N^+\cup\{\omega\})$ be the Johnstone's dcpo \cite{goubault,john}, which is  ordered by
	$(m,n)\leq (m',n')$ iff either $m=m'$ and $n\leq n'$, or $n'=\omega$ and $n\leq m'$ (refer to Figure 1).
	\begin{figure}[htbp]
		\centering
		\includegraphics[scale=.6]{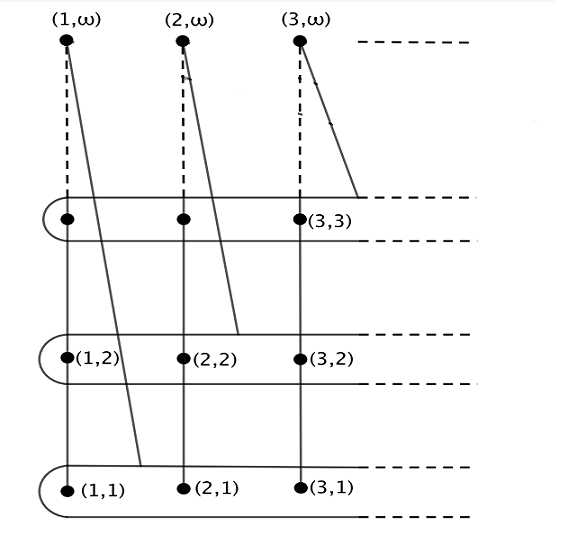}
		\caption{The Johnstone's dcpo $\mathbb J$}
	\end{figure}
	
	We have the following conclusions.
	\begin{enumerate}\label{exm2}
		\item [(r1)] $\Sigma \mathbb J$ is open well-filtered.
		
		Note that $ \forall U,V\in \sigma(\mathbb J)$, $U\ll V$ iff $U=\emptyset$ (see \cite[Exercise 5.2.15]{goubault}), which implies that each $\ll$-filtered family $\mathcal F$ of $\sigma(\mathbb J)$ is equal to $\{\emptyset,U\}$, where $U$ is an arbitrary Scott open set in $\mathbb J$. This means there exists no OWF-set in $\Sigma \mathbb J$. By Theorem \ref{th1}, we deduce that $\Sigma \mathbb J$ is open well-filtered.
		
		\item [(r2)] The set of maximal points $\mathbb N^+\times\{\omega\}$, as a saturated subspace of $\Sigma\mathbb J$, is
		homeomorphic to $\mathbb N^+_{cof}$ of Example \ref{e01}, and thus  is not open well-filtered.
	\end{enumerate}			
	
	From (r1), we have that $\da x$ (which is exactly the closure of $\{x\}$ in $\Sigma\mathbb J$) is not an OWF-set for each $x\in J$. We then deduce that the closure of singletons need not be OWF-sets. From (r2), it follows that
	the saturated subspace of an open well-filtered spaces need not be open well-filtered.
\end{example}

\begin{remark}\label{rem11}
	From Example \ref{exm3}, we deduce that if each $\ll$-filtered family of open sets in a space $X$ contains the empty set, then $X$ must be open well-filtered.
\end{remark}

The following example shows that neither the closed subspace nor the  retract of an open well-filtered space is open well-filtered in general.
\begin{example}
	\cite[Example 4.13]{shen-xi-xu-zhao-2020}
	\label{exam}
	Let $P=\mathbb J\cup \mathbb N^+$, where $\mathbb J$ is the Johnstone's dcpo. For any $x,y\in P$, define $x\leq y$ if one of the following conditions holds (refer to Figure 2):
	\begin{enumerate}
		\item [(i)] $x,y\in \mathbb N^+$ and $x\leq y$ in $\mathbb N^+$ with the usual ordering;
		\item [(ii)] $x,y\in \mathbb J$ and $x\leq y$ in $\mathbb J$;
		\item [(iii)] $x\in \mathbb N^+$ and $y= (x,\omega)$.
	\end{enumerate}
	\begin{figure}
		\centering
		\includegraphics[scale=0.55]{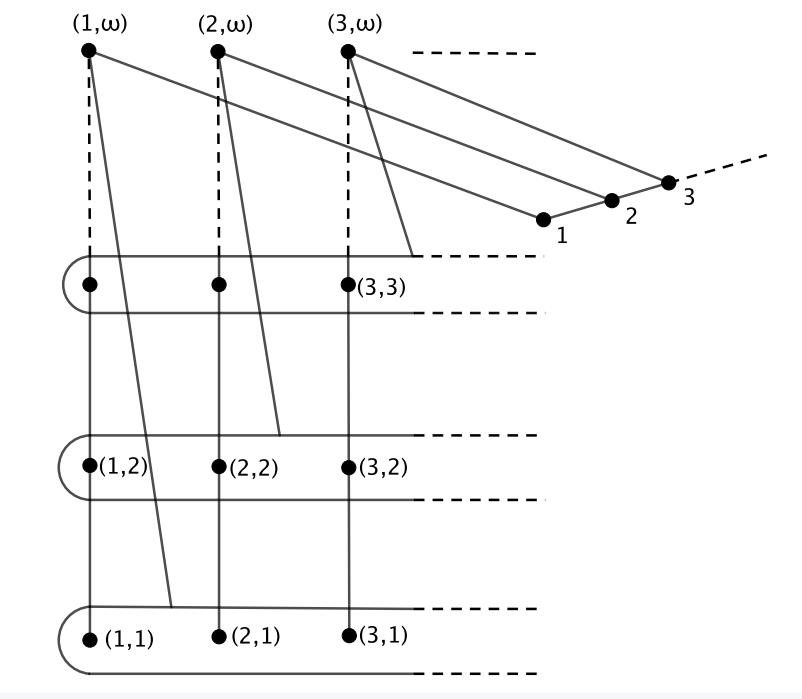}
		\caption{The poset $P$ of Example \ref{exam}}
	\end{figure}
	We have the following conclusions.
	\begin{enumerate}
		\item [(1)] $\Sigma P$ is open well-filtered (see \cite[Example 4.13]{shen-xi-xu-zhao-2020} for details).
		\item [(2)]  $\mathbb N^+$ is a Scott closed subset of $P$, and as a subspace of $P$, is homeomorphic to $\mathbb N^+_{\alpha}$, hence is not open well-filtered (by Example \ref{e01} (2)).
		
		\item [(3)] $\mathbb N^+$, as a subspace of $P$, is a retract of $P$.

		Let $e:\mathbb N^+\longrightarrow P$ be the identity embedding, and define $r:P \longrightarrow\mathbb N^+$ as follows: $\forall x\in P$, $\forall n\in\mathbb N^+$, $r(x)=n$ iff
		$$\left\{\begin{array}{lll}
			x\in\da(1,\omega),& \mbox{ when }n=1;\\
			x\in\da (n,\omega)\setminus\da (n-1,\omega),& \mbox{ when } n\geq 2.
		\end{array}\right.
		$$
		Since $\mathbb N^+$ is  a closed set in $\Sigma P$, we have that  $e$ is continuous, and note that $r^{-1}(\da n)=\da (n,\omega)$ is Scott closed for each $n\in\mathbb N^+$, which implies that $r$ is a continuous mapping. It is clear that the composition $r\circ e$ is the identity mapping on $\mathbb N^+$. Thus (3) holds.
	\end{enumerate}
	
	From above (1)--(3), we deduce that the closed subspace or the retract of an open well-filtered space need not be open well-filtered.	
\end{example}

\begin{proposition}\label{th}
	Let $X$ be a core-compact space. Then every irreducible set in $X$ is an OWF-set.
\end{proposition}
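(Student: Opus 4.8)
The plan is to exhibit, for a given irreducible set $A$, an explicit $\ll$-filtered family of open sets witnessing that $\cl(A)$ is an OWF-set. Since by Remark \ref{re32}(1) a set is an OWF-set exactly when its closure is, it suffices to work with the closed irreducible set $C=\cl(A)$ and to produce a $\ll$-filtered family $\mathcal F\subseteq\mathcal O(X)$ for which $C$ is a minimal closed set meeting every member. The natural candidate is
$$\mathcal F=\{U\in\mathcal O(X): U\cap A\neq\emptyset\},$$
and I would first record the elementary observation that for an open set $U$ one has $U\cap A\neq\emptyset$ if and only if $U\cap C\neq\emptyset$, since $A$ is dense in $C$.

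The main point --- and the only place where core-compactness enters --- is to verify that $\mathcal F$ is $\ll$-filtered. Given $U_1,U_2\in\mathcal F$, I would use the open-set form of irreducibility (if two open sets each meet $A$, then so does their intersection) to get $U_1\cap U_2\in\mathcal F$; in particular $U_1\cap U_2$ contains some point $x\in A$. Then core-compactness supplies an open set $V$ with $x\in V\ll U_1\cap U_2$. Since $x\in V\cap A$ we have $V\in\mathcal F$, and from $V\ll U_1\cap U_2\subseteq U_i$ together with the monotonicity of $\ll$ in its upper argument we conclude $V\ll U_1$ and $V\ll U_2$. This is the step I expect to carry the weight of the argument, although it becomes short once the right candidate family has been fixed.

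Next I would check the two clauses of minimality. That $C$ meets every $U\in\mathcal F$ is immediate from $A\subseteq C$. For minimality, suppose $C'$ is a closed set with $C'\subseteq C$ that also meets every member of $\mathcal F$, and assume for contradiction that $C'\subsetneq C$. Then $W=X\setminus C'$ is open and meets $C$, hence meets $A$, so $W\in\mathcal F$; but $C'\cap W=\emptyset$, contradicting that $C'$ meets every element of $\mathcal F$. Thus $C'=C$, so $C=\cl(A)$ is a minimal closed set intersecting all members of the $\ll$-filtered family $\mathcal F$. By definition this says precisely that $A$ is an OWF-set, completing the proof.
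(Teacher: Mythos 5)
Your proof is correct and follows essentially the same route as the paper: the same candidate family $\mathcal F=\{U\in\mathcal O(X):U\cap A\neq\emptyset\}$, with irreducibility plus core-compactness giving $\ll$-filteredness and the complement-of-$C'$ argument giving minimality. The only difference is cosmetic: you make explicit the monotonicity step $V\ll U_1\cap U_2\subseteq U_i\Rightarrow V\ll U_i$, which the paper leaves implicit.
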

\begin{proof}
	Suppose that $A$ is an irreducible subset of $X$. Let
	$$\mathcal F=\{U\in\mathcal O(X): A\cap U\neq\emptyset\}.$$
	
\medskip
	
\emph{Claim 1:}  $\mathcal F\subseteq_{flt}\mathcal O(X)$.
	
	Let $U_1, U_2\in\mathcal F$. Then $A\cap U_1\neq\emptyset$ and $A\cap U_2\neq \emptyset$, implying that $A\cap U_1\cap U_2\neq\emptyset$.
	Take $x\in A\cap U_1\cap U_2$. Since $X$ is core-compact, there exists $U_3\in\mathcal O(X)$ such that $x\in U_3\ll U_1\cap U_2$. Note that $x\in U_3\cap A\neq\emptyset$, implying that $U_3\in\mathcal F$. Thus $\mathcal F$ is $\ll$-filtered.
	
	\medskip
	
\emph{Claim 2: } $\cl(A)$ is a minimal closed set that intersects all members of $\mathcal F$.
	
	Suppose $B$ is a closed set such that $B\subseteq \cl(A)$ and $B\cap U\neq\emptyset$ for all $U\in\mathcal F$.  We need to prove $\cl(A)\subseteq B$. Otherwise, $\cl(A)\nsubseteq B$, which implies  $A\cap (X\setminus B)\neq\emptyset$. Thus $X\setminus B\in\mathcal F$, which contradicts that
	$B$ intersects all members of $\mathcal F$. Therefore, $\cl(A)=B$.
	
	All this shows that $\cl(A)\in\mfx$.
\end{proof}

The following result is immediate by using Proposition \ref{th}.

\begin{corollary}
	\cite{LWX2020,Xu-Shen-Xi-Zhao}
	Every core-compact open well-filtered space is sober.
\end{corollary}

The following example shows that the continuous image of an OWF-set need not be an OWF-set.
\begin{example}
	Let $X$ be the Alexandroff space of the Johnstone's dcpo $\mathbb J$ (whose open sets are  the upper sets), and let $f:X\longrightarrow\Sigma \mathbb J$ be the identity mapping. Clearly, $f$ is a continuous mapping. Note that for any $x\in \mathbb J$,  since $X$ is locally compact (hence core-compact), by Proposition \ref{th},
	$\da x$ is an OWF-set in $X$, but it is not an OWF-subset of $\Sigma \mathbb J$ by Example \ref{exm3} (r1).
\end{example}

For a $T_0$ space $X$ and  $Y\in\mathcal O(X)$, if
$U,V\in\mathcal O(Y)$, then we have that $U,V\in\mathcal O(X)$, and that $U\ll V$ in $(\mathcal O(Y),\subseteq)$ if and only if $U\ll V$ in $(\mathcal O(X),\subseteq)$. Using this fact, one can prove the following result easily.

\begin{proposition}
	Every open subspace of an open well-filtered space is also open well-filtered.
\end{proposition}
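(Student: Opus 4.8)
The plan is to reduce open well-filteredness of the open subspace $Y$ directly to that of the ambient space $X$, using the fact recorded just before the proposition. The whole argument rests on recognizing that a $\ll$-filtered family together with its target open set in $Y$ can be transported verbatim into $X$, where the hypothesis is then applied.

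First I would fix a $\ll$-filtered family $\mathcal F\subseteq_{flt}\mathcal O(Y)$ and an open set $U\in\mathcal O(Y)$ with $\bigcap\mathcal F\subseteq U$, and aim to produce some $V\in\mathcal F$ with $V\subseteq U$. Since $Y\in\mathcal O(X)$, every member of $\mathcal F$ and the set $U$ are open in $X$, so $\mathcal F\subseteq\mathcal O(X)$ and $U\in\mathcal O(X)$. The next step is to check that $\mathcal F$ remains $\ll$-filtered when regarded as a subfamily of $\mathcal O(X)$: given $U_1,U_2\in\mathcal F$, there is $U_3\in\mathcal F$ with $U_3\ll U_1$ and $U_3\ll U_2$ in $(\mathcal O(Y),\subseteq)$, and by the stated equivalence these way-below relations persist in $(\mathcal O(X),\subseteq)$. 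Hence $\mathcal F\subseteq_{flt}\mathcal O(X)$.

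With this in place, I would observe that the intersection $\bigcap\mathcal F$ is literally the same set whether it is computed in $Y$ or in $X$, since it is an ordinary set-theoretic intersection; thus $\bigcap\mathcal F\subseteq U$ continues to hold in $X$. Applying the open well-filteredness of $X$ to the family $\mathcal F\subseteq_{flt}\mathcal O(X)$ and the open set $U\in\mathcal O(X)$ yields some $V\in\mathcal F$ with $V\subseteq U$, which is precisely what the definition demands of $Y$.

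I do not anticipate a genuine obstacle here. The only point requiring care is the transfer of $\ll$-filteredness, and this is handled entirely by the preservation of the way-below relation across the inclusion $\mathcal O(Y)\hookrightarrow\mathcal O(X)$ noted before the statement; in fact only the forward direction of that equivalence, namely that $U\ll V$ in $\mathcal O(Y)$ implies $U\ll V$ in $\mathcal O(X)$, is actually invoked.
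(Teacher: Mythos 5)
Your proof is correct and is exactly the argument the paper intends: the paper omits the details, noting only that the result follows ``easily'' from the fact that for $Y\in\mathcal O(Y)$ open in $X$, the way-below relation on $\mathcal O(Y)$ agrees with that on $\mathcal O(X)$, and you have simply carried out that transfer of the $\ll$-filtered family and the target open set from $\mathcal O(Y)$ to $\mathcal O(X)$. Your closing remark that only the forward implication of the equivalence is needed is accurate and a nice touch.
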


\begin{definition}
	\cite{goubault}
	A $T_0$ space $X$ is called \emph{core-coherent} if for any  $U, V, W\in\mathcal O(X)$, $U\ll V$ implies that $U\cap W\ll V\cap W$.
\end{definition}

Example \ref{exm2} shows that singletons need not be OWF-sets. As a remedy, we have the following result.
\begin{theorem}
	Let $X$ be a core-coherent space. Then the following statements are equivalent:
	\begin{enumerate} 
		\item [\rm (1)] $X$ is core-compact;
		\item  [\rm (2)]all irreducible sets in $X$ are OWF-sets;
		\item [\rm (3)] the  singletons are OWF-sets.
	\end{enumerate}
\end{theorem}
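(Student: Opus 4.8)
The plan is to establish the cycle of implications $(1)\Rightarrow(2)\Rightarrow(3)\Rightarrow(1)$, noting in advance that core-coherence is needed only for the last step. The implication $(1)\Rightarrow(2)$ is already available: it is exactly Proposition \ref{th}, which says every irreducible set in a core-compact space is an OWF-set, and no coherence hypothesis is required there. For $(2)\Rightarrow(3)$ I would simply observe that every singleton $\{x\}$ is irreducible (if $\{x\}\subseteq F_1\cup F_2$ with $F_1,F_2$ closed, then $x$ lies in one of them, so $\{x\}$ lies in one of them), so that $(2)$ applied to singletons gives $(3)$ at once. Thus the whole weight of the theorem sits in $(3)\Rightarrow(1)$.

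For $(3)\Rightarrow(1)$, fix $x\in X$ and an open set $U$ with $x\in U$; the goal is to produce $V\in\mathcal O(X)$ with $x\in V\ll U$. By $(3)$ the singleton $\{x\}$ is an OWF-set, so there is a $\ll$-filtered family $\{U_i:i\in I\}\subseteq\mathcal O(X)$ for which $\cl(\{x\})=\da x$ is a minimal closed set meeting every $U_i$. The first key point I would record is that $x$ itself lies in every $U_i$: since $\da x\cap U_i\neq\emptyset$, choose $y\in\da x\cap U_i$; then $y\leq x$, and because $U_i$ is open (hence an upper set for the specialization order) we get $x\in U_i$. In particular the family is nonempty and every member contains $x$.

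The decisive step, and the only place where core-coherence enters, is to convert the filtered way-below structure of the $U_i$ into a way-below relation localized inside $U$. I would pick any $U_{i_0}$ from the family and use $\ll$-filteredness (taking $U_1=U_2=U_{i_0}$) to obtain $U_{i_1}$ in the family with $U_{i_1}\ll U_{i_0}$; by the previous paragraph $x\in U_{i_1}$. Applying core-coherence with $W=U$ to $U_{i_1}\ll U_{i_0}$ yields $U_{i_1}\cap U\ll U_{i_0}\cap U$. Since $U_{i_0}\cap U\subseteq U$ and the way-below relation is monotone in its upper argument, this gives $U_{i_1}\cap U\ll U$. Setting $V=U_{i_1}\cap U$ we have $x\in V$ (as $x\in U_{i_1}$ and $x\in U$) and $V\ll U$, which is exactly core-compactness at $x$.

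The step I expect to be the genuine obstacle — or at least the one requiring care — is recognizing that the minimality of $\da x$ is not what does the work; what matters are the two structural facts that (a) the witnessing open sets all contain $x$, and (b) core-coherence converts a single filtered way-below pair $U_{i_1}\ll U_{i_0}$ into one trapped inside the prescribed neighborhood $U$. Once these are isolated the argument is short, and it explains why core-coherence is indispensable precisely here: without it one would only know $U_{i_1}\ll U_{i_0}$, with no control relating $U_{i_0}$ to $U$.
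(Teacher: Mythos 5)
Your proof is correct and follows essentially the same route as the paper: $(1)\Rightarrow(2)$ via Proposition \ref{th}, $(2)\Rightarrow(3)$ by irreducibility of singletons, and for $(3)\Rightarrow(1)$ the identical argument of extracting $U_{i_1}\ll U_{i_0}$ from the $\ll$-filtered witness family (with $x$ in every member by openness and the specialization order) and applying core-coherence with $W=U$ to get $x\in U\cap U_{i_1}\ll U\cap U_{i_0}\subseteq U$. Your explicit justification that $x\in U_i$ for all $i$ is a point the paper leaves implicit, but the argument is the same.
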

\begin{proof}
	By Proposition \ref{th}, that (1) $\Rightarrow$ (2) is clear, and since singletons are irreducible, we obtain that (2) $\Rightarrow$ (3).
	Now we prove (3) $\Rightarrow$ (1).
	
	Let $x\in X$ and $U$ be an open neighborhood of $x$. Since $\{x\}$ is an OWF-set, there exists a $\ll$-filtered family $\{U_i:i\in I\}\subseteq\mathcal O(X)$ such that  $\cl(\{x\})$ is a minimal closed set that intersects all $U_i$, $i\in I$. Fix an $i_0\in I$. It follows that $x\in U_{i_0}$.
	Then there exists ${i_1}\in I$ such that $x\in U_{i_1}\ll U_{i_0}$. Since $X$ is core-coherent, it holds that $x\in U\cap U_{i_1}\ll U\cap U_{i_0}\subseteq U$. Therefore,
	$X$ is core-compact.
\end{proof}

It is easy to verify the following lemma.
\begin{lemma}\label{preserve}
	Let $f:X\longrightarrow Y$ be a continuous  open mapping between $T_0$ spaces, and $U,V\in\mathcal O(X)$. If $U\ll V$, then $f(U)\ll f(V)$.
\end{lemma}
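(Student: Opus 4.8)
The plan is to reduce everything to the covering characterization of the way-below relation on open sets supplied by Lemma \ref{lem01}. Recall that taking the full topology $\mathcal O(X)$ as its own subbase, that lemma tells us $U\ll V$ precisely when every cover of $V$ by open sets admits a finite subfamily that already covers $U$ (and similarly in $Y$). So the entire argument becomes a matter of transporting open covers across $f$ in the two available directions: pulling back along $f$ by continuity, and pushing forward along $f$ by openness.

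First I would record the two hypotheses in the exact form they are needed. Since $f$ is open, $f(U)$ and $f(V)$ are genuinely open sets of $Y$, so it even makes sense to ask whether $f(U)\ll f(V)$ in $(\mathcal O(Y),\subseteq)$. Since $f$ is continuous, the preimage of any open set of $Y$ is open in $X$. I would also isolate the two purely set-theoretic inclusions that drive the estimate, namely $V\subseteq f^{-1}(f(V))$ and $f(f^{-1}(W))\subseteq W$ for any $W\subseteq Y$.

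The core of the proof then runs as follows. Let $\{G_j:j\in J\}\subseteq\mathcal O(Y)$ be an arbitrary open cover of $f(V)$, so $f(V)\subseteq\bigcup_{j\in J}G_j$. Applying $f^{-1}$ and using $V\subseteq f^{-1}(f(V))$ gives $V\subseteq f^{-1}\bigl(\bigcup_{j\in J}G_j\bigr)=\bigcup_{j\in J}f^{-1}(G_j)$, and each $f^{-1}(G_j)$ is open by continuity; hence $\{f^{-1}(G_j):j\in J\}$ is an open cover of $V$ in $X$. Because $U\ll V$, Lemma \ref{lem01} yields a finite subset $F\subseteq J$ with $U\subseteq\bigcup_{j\in F}f^{-1}(G_j)=f^{-1}\bigl(\bigcup_{j\in F}G_j\bigr)$. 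Pushing this forward through $f$ and using $f(f^{-1}(W))\subseteq W$ gives $f(U)\subseteq\bigcup_{j\in F}G_j$, a finite subfamily of the original cover. Since the cover of $f(V)$ was arbitrary, the covering criterion in $Y$ delivers $f(U)\ll f(V)$.

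I do not expect a genuine obstacle here; the one point deserving care is to phrase $\ll$ via the finite-subcover property rather than directly through directed suprema. Working with arbitrary open covers (and finite subfamilies) sidesteps any need to verify directedness of the pulled-back family, which is exactly what makes the continuity/openness pair of $f$ fit together cleanly. The remaining verifications are the elementary inclusions for direct and inverse images noted above.
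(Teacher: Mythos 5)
Your proof is correct. Note that the paper itself gives no argument for this lemma --- it is stated with the remark ``It is easy to verify the following lemma'' --- so there is nothing to diverge from; your write-up is a legitimate fleshing-out of the intended easy verification. All the pieces are in order: openness of $f$ is used exactly where needed (to ensure $f(U),f(V)\in\mathcal O(Y)$, so that $f(U)\ll f(V)$ is even meaningful in $(\mathcal O(Y),\subseteq)$), continuity gives the pullback of covers, and the inclusions $V\subseteq f^{-1}(f(V))$ and $f(f^{-1}(W))\subseteq W$ do the rest. One small remark: your stated motive for routing through the covering characterization of Lemma \ref{lem01} --- avoiding a directedness check --- is unnecessary caution. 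The direct argument from the definition is just as short: if $\{W_j:j\in J\}\subseteq\mathcal O(Y)$ is directed with $f(V)\subseteq\bigcup_{j}W_j$, then $\{f^{-1}(W_j):j\in J\}$ is automatically directed because $f^{-1}$ is monotone, $V\subseteq\bigcup_j f^{-1}(W_j)$, so $U\ll V$ gives $U\subseteq f^{-1}(W_{j_0})$ for some $j_0$, whence $f(U)\subseteq W_{j_0}$. Either route is fine; yours has the mild additional dependence on Lemma \ref{lem01} (with $\mathcal S=\mathcal O(X)$ as a subbase for itself, which is legitimate), while the directed-family argument is self-contained.
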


Regarding the product spaces,  we are still not able to prove that the product of two open well-filtered spaces  is a open well-filtered space. Here is a result for some special spaces.
\begin{theorem}
	For each $T_0$ space $X$, the product $X\times \Sigma\mathbb J$ is open well-filtered.
\end{theorem}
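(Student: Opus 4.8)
The plan is to show that the way-below relation in the product $X\times\Sigma\mathbb J$ is degenerate, and then to invoke Remark \ref{rem11}. Concretely, I would prove that for any open sets $G,H\in\mathcal O(X\times\Sigma\mathbb J)$, the relation $G\ll H$ forces $G=\emptyset$. Once this is established, the open well-filteredness is essentially automatic.

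The key tool is the second projection $\pi_2\colon X\times\Sigma\mathbb J\to\Sigma\mathbb J$, which is both continuous and open, since projections out of a product always are. Suppose $G\ll H$. By Lemma \ref{preserve}, an open continuous map preserves the way-below relation, so $\pi_2(G)\ll\pi_2(H)$ in $\mathcal O(\Sigma\mathbb J)=\sigma(\mathbb J)$. But by the property of Johnstone's dcpo recalled in Example \ref{exm3}(r1), the way-below relation on $\sigma(\mathbb J)$ is degenerate: $U\ll V$ holds only when $U=\emptyset$. Hence $\pi_2(G)=\emptyset$, and since $\pi_2(G)=\emptyset$ exactly when $G=\emptyset$, we conclude $G=\emptyset$.

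With this in hand, consider any $\ll$-filtered family $\mathcal F\subseteq\mathcal O(X\times\Sigma\mathbb J)$. Being $\ll$-filtered, $\mathcal F$ is nonempty, so I may pick $U_1\in\mathcal F$; then there is $U_3\in\mathcal F$ with $U_3\ll U_1$, whence $U_3=\emptyset$ by the previous step. Thus every $\ll$-filtered family of open sets in $X\times\Sigma\mathbb J$ contains the empty set, and Remark \ref{rem11} immediately yields that $X\times\Sigma\mathbb J$ is open well-filtered.

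The only point requiring genuine care is the first step, the claim that $\ll$ collapses in the product; everything after it is formal. I would therefore emphasize the verification that $\pi_2$ is an open continuous map and the correct direction of Lemma \ref{preserve}. Should one wish to avoid appealing to the projection, an alternative is to argue directly from the subbase characterization in Lemma \ref{lem01}, using the standard product subbase $\{U\times\Sigma\mathbb J:U\in\mathcal O(X)\}\cup\{X\times W:W\in\sigma(\mathbb J)\}$ and the fact that a way-below box must have an empty $\Sigma\mathbb J$-component; but the projection route is shorter and cleaner, so I expect no real obstacle here.
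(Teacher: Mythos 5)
Your proposal is correct and follows essentially the same route as the paper: project along $p_2$, use Lemma \ref{preserve} together with the degeneracy of $\ll$ on $\sigma(\mathbb J)$ to conclude that $G\ll H$ forces $G=\emptyset$, and then apply Remark \ref{rem11}. Your extra unpacking of the Remark \ref{rem11} step (picking $U_1$ and then $U_3\ll U_1$) is fine under the paper's standing convention that filtered families are nonempty, and adds nothing essentially new.
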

\begin{proof}
	Let $U,V\in \mathcal O(X\times \Sigma\mathbb J)$ such that $U\ll V$.
	We prove that $U=\emptyset$.
	Note that the projection $p_2$ is a continuous open mapping, so $p_2(U), p_2(V) \in \sigma(\mathbb J)$ and by Lemma \ref{preserve}, $p_2(U)\ll p_2(V)$. Thus $p_2(U)=\emptyset$, which implies  that  $U=\emptyset$. By Remark \ref{rem11}, $X\times \Sigma\mathbb J$ is an open well-filtered space.
\end{proof}

The above theorem indicates that the open well-filteredness of the product of spaces does not imply the open well-filteredness of the  factor spaces. 
In the following, we will show that the open well-filteredness of the product of finite $T_0$ spaces implies that one of the factor spaces is open well-filtered.

It also is trivial to verify the following lemma.
\begin{lemma}\label{lem316}
	Let $\{X_k:1\leq k\leq n\}$ be a finite family of $T_0$ spaces, and $U_k,V_k\in \mathcal O(X)$ such that $U_k\ll V_k$ for $1\leq k\leq n$. Then $\prod_{1\leq k\leq n}U_k\ll\prod_{1\leq k\leq n}V_k$.
\end{lemma}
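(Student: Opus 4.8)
The plan is to apply the subbasic characterization of the way-below relation in Lemma \ref{lem01}, using for the product $\prod_{1\le k\le n}X_k$ the standard subbase
$$\mathcal S=\{p_k^{-1}(W): 1\le k\le n,\ W\in\mathcal O(X_k)\},$$
where $p_k$ is the $k$-th projection. By Lemma \ref{lem01} it then suffices to extract a finite subcover of $\prod_k U_k$ from an arbitrary $\mathcal S$-cover of $\prod_k V_k$. So suppose $\prod_k V_k\subseteq\bigcup_{j\in J}p_{k(j)}^{-1}(W_j)$ with each $W_j\in\mathcal O(X_{k(j)})$. First I would group the cover by coordinate: put $J_k=\{j\in J: k(j)=k\}$ and $G_k=\bigcup_{j\in J_k}W_j\in\mathcal O(X_k)$, so that the cover becomes $\prod_k V_k\subseteq\bigcup_{1\le k\le n} p_k^{-1}(G_k)$.

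The crux is the observation that such a cover by \emph{cylinders} forces one factor to be covered outright, that is, $V_{k_0}\subseteq G_{k_0}$ for some $k_0$. Indeed, were $V_k\not\subseteq G_k$ for every $k$, I could pick $v_k\in V_k\setminus G_k$ for each $k$ and form $(v_1,\dots,v_n)\in\prod_k V_k$; as $p_k(v_1,\dots,v_n)=v_k\notin G_k$, this point would lie in no $p_k^{-1}(G_k)$, contradicting the cover. Fixing such a $k_0$, the hypothesis $U_{k_0}\ll V_{k_0}$ applied to $V_{k_0}\subseteq\bigcup_{j\in J_{k_0}}W_j$ produces a finite $F\subseteq J_{k_0}$ with $U_{k_0}\subseteq\bigcup_{j\in F}W_j$. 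Then
$$\prod_k U_k\subseteq p_{k_0}^{-1}(U_{k_0})\subseteq\bigcup_{j\in F}p_{k(j)}^{-1}(W_j),$$
so $\{p_{k(j)}^{-1}(W_j): j\in F\}$ is the required finite subfamily, and Lemma \ref{lem01} yields $\prod_k U_k\ll\prod_k V_k$.

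The argument is elementary, and the only point needing care is the reduction above: a cover of a product by cylinders covers everything precisely when some single factor is already covered by its own cylinders, which rests on nothing more than the nonemptiness of a product of nonempty sets. Hence I anticipate no genuine obstacle. An alternative route would be induction on $n$ starting from $n=2$, where one proves $U_1\times U_2\ll V_1\times V_2$ by a tube-lemma style argument (fix $x\in V_1$, cover $\{x\}\times V_2$, use $U_2\ll V_2$ to obtain a finite subcover together with an open tube $W_x\ni x$, then apply $U_1\ll V_1$ to the cover $\{W_x:x\in V_1\}$ of $V_1$); I would nonetheless prefer the direct subbasic proof, since it dispatches all $n$ simultaneously.
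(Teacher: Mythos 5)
Your proof is correct. Note, however, that the paper offers no proof to compare against: Lemma \ref{lem316} is prefaced only by ``It also is trivial to verify the following lemma,'' so your argument fills in a verification the authors omitted. Your route --- the subbase characterization of Lemma \ref{lem01} applied to the standard cylinder subbase $\{p_k^{-1}(W):W\in\mathcal O(X_k)\}$, the grouping of a subbasic cover into $\bigcup_k p_k^{-1}(G_k)$, and the observation that a cylinder cover of a product forces $V_{k_0}\subseteq G_{k_0}$ for some $k_0$ --- is almost certainly the intended argument, since the paper proves Lemma \ref{lem01} precisely to make such way-below computations routine. Two small remarks. First, your worry about nonemptiness is actually vacuous in the contradiction step: the hypothesis $V_k\nsubseteq G_k$ itself supplies a point $v_k\in V_k\setminus G_k$, so no separate nonemptiness assumption on the $V_k$ is needed there; and in the degenerate case where the chosen $k_0$ has $J_{k_0}=\emptyset$ (forcing $V_{k_0}=\emptyset$), the relation $U_{k_0}\ll V_{k_0}$ gives $U_{k_0}\subseteq V_{k_0}=\emptyset$, so the empty subfamily covers $\prod_k U_k$ and the extraction still succeeds. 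Second, your extracted family $\{p_{k(j)}^{-1}(W_j):j\in F\}$ with $F\subseteq J_{k_0}$ is a genuine subfamily of the original cover, which is exactly what Lemma \ref{lem01} demands --- a point worth being explicit about, since extracting a cover by the grouped sets $p_k^{-1}(G_k)$ would not suffice. (You also silently corrected the paper's typo $U_k,V_k\in\mathcal O(X)$ to $\mathcal O(X_k)$, which is clearly what is meant.)
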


\begin{theorem}
	Let $\{X_k:1\leq k\leq n\}$ be a finite family of $T_0$ spaces, and $X$ be their product space. If $X$ is an open well-filtered space, then there is $k_0$ ($1\leq k_0\leq n$) such that $X_{k_0}$ is open well-filtered.
\end{theorem}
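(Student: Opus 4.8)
The plan is to argue by contraposition: assuming that \emph{none} of the factors $X_k$ is open well-filtered, I would assemble the witnessing data from each factor into a single OWF-set in the product $X$ that is not the closure of a singleton, contradicting open well-filteredness of $X$ via Theorem \ref{th1}. Concretely, the failure of open well-filteredness for $X_k$ gives, through Theorem \ref{th1}, a closed OWF-set $A_k\in\mathsf{OWF}(X_k)$ with $A_k\neq\cl(\{x\})$ for every $x\in X_k$; unfolding the definition, there is a $\ll$-filtered family $\mathcal F_k=\{U^{(k)}_i:i\in I_k\}\subseteq_{flt}\mathcal O(X_k)$ for which $A_k$ is a minimal closed set meeting every $U^{(k)}_i$. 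In particular $A_k\cap U^{(k)}_i\neq\emptyset$ for all $i\in I_k$.

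Next I would form the candidate set $A=\prod_{1\leq k\leq n}A_k$ together with the candidate family
$$\mathcal F=\Big\{\prod_{1\leq k\leq n}U^{(k)}_{i_k}:(i_k)\in\textstyle\prod_{k}I_k\Big\}.$$
Since each $A_k$ is irreducible and closed, $A$ is irreducible and closed by Proposition \ref{closure}. That $\mathcal F$ is $\ll$-filtered follows from Lemma \ref{lem316} together with the $\ll$-filteredness of each $\mathcal F_k$: given two basic products, pick in each coordinate a common lower index and take the corresponding product, which is way-below both by Lemma \ref{lem316}. Moreover $A$ meets every member of $\mathcal F$, since $A\cap\prod_k U^{(k)}_{i_k}=\prod_k(A_k\cap U^{(k)}_{i_k})$ is a finite product of nonempty sets.

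The crux is to verify that $A$ is in fact a \emph{minimal} closed set meeting all of $\mathcal F$, so that $A\in\mathsf{OWF}(X)$. Here I would invoke Lemma \ref{llrudin}: since $A$ is closed and meets every member of the $\ll$-filtered family $\mathcal F$, there is a minimal such closed set $A_0\subseteq A$, and $A_0$ is irreducible. By Proposition \ref{closure}, $A_0=\prod_k C_k$ with each $C_k$ irreducible closed in $X_k$ and $C_k\subseteq A_k$. Fixing $k$ and letting $i_k$ range over $I_k$ (with the other coordinates arbitrary), the relation $A_0\cap\prod_k U^{(k)}_{i_k}\neq\emptyset$ forces $C_k\cap U^{(k)}_{i_k}\neq\emptyset$ for every $i_k\in I_k$; hence $C_k$ is a closed subset of $A_k$ meeting all of $\mathcal F_k$, and minimality of $A_k$ yields $C_k=A_k$. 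Thus $A_0=A$, so $A$ is minimal and $A\in\mathsf{OWF}(X)$.

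Finally, since $X$ is open well-filtered, Theorem \ref{th1} produces a point $(x_k)\in X$ with $A=\cl(\{(x_k)\})=\prod_k\cl(\{x_k\})$; comparing coordinatewise with $A=\prod_k A_k$ gives $A_k=\cl(\{x_k\})$ for every $k$, contradicting the choice of $A_k$. I expect the main obstacle to be precisely the minimality step: a direct attempt to show that an arbitrary closed $B\subseteq A$ meeting $\mathcal F$ equals $A$ stumbles because projections of closed sets need not be closed, and it is the passage through an irreducible minimal $A_0$ (Lemma \ref{llrudin}) combined with the product description of irreducible closed sets (Proposition \ref{closure}) that makes the coordinatewise reduction legitimate.
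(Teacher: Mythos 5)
Your proof is correct, but it takes a genuinely different route from the paper, which argues directly from the definition of open well-filteredness and never mentions OWF-sets. In the paper, the failure of open well-filteredness in each $X_k$ is witnessed by a $\ll$-filtered family $\mathcal F_k\subseteq\mathcal O(X_k)$ and an open set $O_k$ with $\bigcap\mathcal F_k\subseteq O_k$ but $U\nsubseteq O_k$ for every $U\in\mathcal F_k$; the product family $\mathcal F=\{\prod_{1\leq k\leq n}U_k: U_k\in\mathcal F_k\}$ is $\ll$-filtered by Lemma \ref{lem316} and satisfies $\bigcap\mathcal F=\prod_k\bigcap\mathcal F_k\subseteq\prod_k O_k$, so open well-filteredness of $X$ gives a member $\prod_k U_k\subseteq\prod_k O_k$; since each $U_k$ is nonempty (precisely because $U_k\nsubseteq O_k$), projecting coordinatewise yields $U_k\subseteq O_k$ for all $k$, a contradiction. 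Your route instead passes through the OWF-set characterization: you use Theorem \ref{th1} to extract in each factor a closed OWF-set $A_k$ that is not a point closure, and your key step --- that $A=\prod_k A_k$ is itself \emph{minimal} for the product family --- is handled correctly by combining Lemma \ref{llrudin} with Proposition \ref{closure}, exactly where a naive attempt via projections of closed sets would break down, as you yourself note; the final contradiction via $\cl(\{(x_k)_k\})=\prod_k\cl(\{x_k\})$ is also sound. Both arguments share the one essential ingredient, Lemma \ref{lem316}. What the paper's version buys is economy: only the definition, Lemma \ref{lem316}, and the trivial nonemptiness observation are needed. What yours buys is a stronger structural fact established en route --- a finite product of closed OWF-sets, with minimality inherited coordinatewise, is again an OWF-set of the product --- at the cost of invoking the Rudin-type Lemma \ref{llrudin}, Theorem \ref{th1}, and the description of irreducible closed sets in products.
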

\begin{proof}
	Suppose on the contrary that every $X_k$ is not open well-filtered, $1\leq k\leq n$. Then there exist a $\ll$-filtered family $\mathcal F_k\subseteq\mathcal O(X_k)$ and an open set $O_k$ in $X_k$ such that $\bigcap\mathcal F_k\subseteq O_k$, but $U\nsubseteq O_k$ for any $U\in\mathcal F_k$. Define
	\begin{center}
		$\mathcal F=\left\{\prod_{1\leq k\leq n}U_k: \forall k, U_k\in\mathcal F_k\right\}.$
	\end{center}
	By Lemma \ref{lem316}, one can deduce that $\mathcal F$ is a $\ll$-filtered family of $\mathcal O(X)$ such that $\bigcap\mathcal F\subseteq \prod_{1\leq k\leq n}O_k\in\mathcal O(X)$.
	Since $X$ is open well-filtered, there exists $\prod_{1\leq k\leq n}U_k\in\mathcal F$ (i.e., $U_k\in\mathcal F_k$ for $1\leq k\leq n$) such that $\prod_{1\leq k\leq n}U_k\subseteq \prod_{1\leq k\leq n}O_k$. Note that each $U_k$  ($1\leq k\leq n$) is not empty,  which follows that $U_k\subseteq O_k$ for $1\leq k\leq n$, a contradiction.
\end{proof}

\section{Upper  spaces and open well-filteredness}

In this section, we prove that if a space $X$ is open well-filtered then its upper space (or the Smyth power space)  is also open well-filtered. The proof here makes use of a technique employed in \cite{lyu-chen-jia}.

For any topological space $X$,  we  use $\mathcal{D}(X)$ to
denote the set of all nonempty compact saturated subsets of $X$. The
\emph{upper Vietoris topology} on $\mathcal{D}(X)$ is  the topology
that has $\{\Box U: U\in\mathcal{O}(X)\}$ as a base,
where $ \Box U=\{K\in \mathcal{D}(X): K\subseteq U\}$. The set
$\mathcal{D}(X)$ equipped with the upper Vietoris topology is called  the \emph{upper space}
or  \emph{Smyth power space} of $X$. Note that
$\{\diamondsuit\,F: X\setminus F\in\mathcal{O}(X)\}$ is a base of the co-upper Vietoris topology, where
$\diamondsuit\,F=\{K\in \mathcal{D}(X): K\cap F\not=\emptyset\}$.

\begin{remark}\label{rm1}
	Let $X$ be a $T_0$ space, and $U, U_1, U_2\in\mathcal{O}(X)$.
	\begin{enumerate}
		\item[(1)] $\Box U_1\subseteq \Box U_2$ if and only if $U_1\subseteq U_2$.
		\item [(2)] For any $x\notin U$,
		$\diamondsuit \cl(\{x\})\cap\Box U=\emptyset$.
		\item [(3)] If $\Box U\subseteq \Box U_1\cup \Box  U_2$, then $\Box U\subseteq \Box U_1$ or
		$\Box U\subseteq \Box U_2$ \cite{lyu-chen-jia}.
	\end{enumerate}
\end{remark}

We now state and prove the main result in this section.

\begin{theorem}
	For any open well-filtered space $X$, the upper space $\mathcal{D}(X)$ is open well-filtered.
\end{theorem}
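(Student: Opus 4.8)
The plan is to use the characterization of open well-filteredness from Theorem~\ref{th1}: it suffices to show that every OWF-set in $\mathcal{D}(X)$ is the closure of a singleton. So I would start with a closed OWF-set $\mathcal{A}\in\mathsf{OWF}(\mathcal{D}(X))$, witnessed by a $\ll$-filtered family $\{\mathcal{U}_i:i\in I\}\subseteq\mathcal O(\mathcal{D}(X))$ for which $\mathcal{A}$ is a minimal closed set meeting every $\mathcal{U}_i$. By Remark~\ref{re32}(2) such an $\mathcal{A}$ is irreducible, and the goal is to produce a single compact saturated set $K_0\in\mathcal{D}(X)$ with $\mathcal{A}=\cl(\{K_0\})$; uniqueness is then forced by the $T_0$ property of $\mathcal{D}(X)$.

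The first technical step is to reduce the $\mathcal{U}_i$ to basic opens. Since $\{\Box U:U\in\mathcal O(X)\}$ is a base, I would apply Lemma~\ref{lem01} with this base (interpreting it via Remark~\ref{rm1}, especially part~(3), which is the Smyth-power analogue of the prime/irreducibility behavior of $\Box$) to replace each $\mathcal{U}_i$ by a basic open $\Box U_i$ while preserving that the family is $\ll$-filtered and that $\mathcal{A}$ still meets each member minimally. I would then push the structure back down to $X$: set
$$\mathcal{F}=\{U\in\mathcal O(X):\Box U\text{ meets }\mathcal{A}\},$$
and verify, using Remark~\ref{rm1}(1) to translate inclusions $\Box U_1\subseteq\Box U_2$ into $U_1\subseteq U_2$ and the way-below transfer for $\Box$, that $\mathcal{F}$ is a $\ll$-filtered family of open sets in $X$. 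Because $X$ is open well-filtered, Proposition~\ref{prop2} then gives that $K_0:=\bigcap\mathcal{F}$ is a nonempty compact saturated set, i.e.\ a genuine point $K_0\in\mathcal{D}(X)$.

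The final step is to show $\mathcal{A}=\cl(\{K_0\})=\diamondsuit\,\da K_0$ in the specialization order of $\mathcal{D}(X)$ (where $K\leq K'$ iff $K'\subseteq K$, so $K_0$ is the largest element below the members of $\mathcal{A}$). The inclusion $K_0\in\mathcal{A}$ should follow from minimality: $\cl(\{K_0\})$ is a closed subset of $\mathcal{A}$ that still meets every $\Box U_i$ (each $K_0\subseteq U_i$ by construction of $\mathcal{F}$), so by minimality it equals $\mathcal{A}$. Concretely I expect to check that for each $i$, $K_0\in\Box U_i$, which reduces to $K_0=\bigcap\mathcal{F}\subseteq U_i$; this uses open well-filteredness of $X$ to pass from the filtered intersection being contained in $U_i$ to membership, in the spirit of the $(1)\Rightarrow(2)$ direction of Theorem~\ref{th1}.

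The main obstacle I anticipate is the faithful transfer between the two levels: making precise that the $\ll$ relation in $(\mathcal O(\mathcal{D}(X)),\subseteq)$ restricted to the basic opens corresponds to the $\ll$ relation downstairs in $(\mathcal O(X),\subseteq)$, and that minimality of $\mathcal{A}$ upstairs yields exactly the right filtered family $\mathcal{F}$ downstairs whose intersection is the witnessing point. In particular, I would need Remark~\ref{rm1}(3) to guarantee that way-below behaves well on basic opens (so that a $\ll$-filtered family of arbitrary opens in $\mathcal{D}(X)$ can be cofinally replaced by basic ones), and I would need to argue that no smaller compact saturated set than $K_0$ can meet all the $\Box U_i$, which is precisely where the open well-filteredness of $X$ and the compactness supplied by Proposition~\ref{prop2} do the essential work.
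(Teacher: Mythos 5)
Your overall framing---reducing the theorem to Theorem~\ref{th1} applied to $\mathcal{D}(X)$, so that it suffices to show every OWF-set $\mathcal{A}$ in $\mathcal{D}(X)$ is a point closure---is a legitimate reformulation of what the paper does by direct contradiction (via Lemma~\ref{llrudin}), but the substantive content of the proof is missing, and the two steps you substitute for it would fail. The first gap is the ``reduction to basic opens'': you propose to replace each $\mathcal{U}_i$ by a \emph{single} basic open $\Box U_i$ while preserving $\ll$-filteredness and the minimality of $\mathcal{A}$. Neither Lemma~\ref{lem01} nor Remark~\ref{rm1}(3) delivers this. Interpolating a step $\mathcal{U}_{i_1}\ll\mathcal{U}_{i_2}$ only produces \emph{finite unions} of basic opens in between, and selecting one box per index (say, one meeting $\mathcal{A}$, which irreducibility does give you) destroys any control on $\ll$ between the selected boxes. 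This is exactly where the paper's proof does its real work: it keeps each $\mathcal{U}_i$ as a (possibly infinite) union of boxes, prunes it to $\hat{\mathcal{U}}_i=\bigcup\{\Box U_{i,t}:\Box U_{i,t}\cap\mathcal{C}_0\neq\emptyset\}$ relative to the minimal closed set $\mathcal{C}_0$, proves by a two-step interpolation (Fact~1: $\mathcal{U}_{i_1}\ll\mathcal{U}_{i_2}\ll\mathcal{U}_{i_3}$ implies $\hat{\mathcal{U}}_{i_1}\ll\hat{\mathcal{U}}_{i_3}$) that the pruned family is still $\ll$-filtered, and only then transfers $\ll$ down to $X$ via directed covers of boxes (Fact~2).

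The second gap is that your family $\mathcal{F}=\{U\in\mathcal{O}(X):\Box U\cap\mathcal{A}\neq\emptyset\}$ is in general only \emph{filtered under inclusion} (irreducibility of $\mathcal{A}$ gives $U_1\cap U_2\in\mathcal{F}$, since $\Box U_1\cap\Box U_2=\Box(U_1\cap U_2)$), not $\ll$-filtered: producing $U_3\in\mathcal{F}$ with $U_3\ll U_1$ and $U_3\ll U_2$ is an interpolation that requires core-compactness---this is precisely the hypothesis in Proposition~\ref{th}, which is not available here. In a space where $U\ll V$ iff $U=\emptyset$ (as in $\Sigma\mathbb{J}$, Example~\ref{exm3}), no family of nonempty opens is $\ll$-filtered, so Proposition~\ref{prop2} cannot be applied to your $\mathcal{F}$ and $\bigcap\mathcal{F}$ may be empty. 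The correct move, as in the paper, is to push down the \emph{given} $\ll$-filtered family (the sets $U_i$ built from $\hat{\mathcal{U}}_i$), not a family manufactured from $\mathcal{A}$ alone. Finally, your last step is circular: to invoke minimality of $\mathcal{A}$ you must first know $\cl(\{K_0\})\subseteq\mathcal{A}$, i.e.\ $K_0\in\mathcal{A}$, which you assert rather than prove; the paper's counterparts are its two hardest steps---Fact~3 ($K\notin\mathcal{U}$, using open well-filteredness of $X$) and especially Fact~4, the argument with $\diamondsuit\,\cl(\{e\})$ for $e\in K\setminus U_{i_0,t_0}$, which exploits the minimality of $\mathcal{C}_0$ to show $K\in\hat{\mathcal{U}}_i$ for every $i$. (A small side error: in $\mathcal{D}(X)$ one has $\cl(\{K_0\})=\{K\in\mathcal{D}(X):K_0\subseteq K\}$, which is not $\diamondsuit\,\da K_0$.)
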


\begin{proof}Assume that  $X$ is an open well-filtered space.
	
	Let $\{\mathcal{U}_{i}:i\in I\}$ be a $\ll$-filtered family of open sets $\mathcal{U}_i$ in $\mathcal{D}(X)$ and
	$$\bigcap\{\mathcal{U}_{i}: i\in I\}\subseteq \mathcal{U}$$
	for an open set $\mathcal{U}$ in $\mathcal{D}(X)$.
	
	For each $i\in I$, let $\mathcal{U}_{i}=\bigcup\{\Box U_{i, t}: t\in T_i\}$, where $U_{i, t}\in \mathcal{O}(X)$.
	
	Assume that for each $i\in I$, $\mathcal{U}_i\not\subseteq \mathcal{U}$. Let $\mathcal{C}=\mathcal{D}(X)\setminus \mathcal{U}$.
	Then $\mathcal{C}$ is a closed set  in $\mathcal{D}(X)$ such that $\mathcal C\cap \mathcal U_i\neq\emptyset$ for all $i\in I$. By Lemma \ref{llrudin}, there is a minimal closed set $\mathcal{C}_{0}\subseteq \mathcal{C}$ which also has a  nonempty intersection  with every $\mathcal{U}_{i}$.
	
	For each $i\in I$, let
	$$\hat{\mathcal{U}}_i =\bigcup\{\Box U_{i, t}: \Box U_{i, t}\cap \mathcal{C}_0\not=\emptyset,t\in T_i\}.$$
	
	\medskip
	
	\emph{ Fact 1:}  If $\mathcal{U}_{i_1} \ll  \mathcal{U}_{i_2} \ll \mathcal{U}_{i_3}$, then $\hat{\mathcal{U}}_{i_1} \ll  \hat{\mathcal{U}}_{i_3}$. Therefore,  $\{\hat{\mathcal{U}}_i:i\in I\}$ is $\ll$-filtered.
	
	We just need to verify the first statement.
	To see this, let $\{\mathcal{V}_{l}:l\in L\}$ be a directed open cover of $\hat{\mathcal{U}}_{i_3}$.
	Then $\{\mathcal{V}_{l}\cup (\mathcal D(X)\setminus\mathcal{C}_0):l\in L\}$ is a directed open cover of  $\mathcal{U}_{i_3}$.
	By the assumption, there is a $l_0\in L$ such that $\mathcal{U}_{i_2}\subseteq\mathcal{V}_{l_0}\cup (\mathcal D(X)\setminus\mathcal{C}_0)$.
	
	By $\mathcal{U}_{i_1} \ll  \mathcal{U}_{i_2}$ and the structure of the upper Vietoris topology, there exist
	$$ \Box  W_{1}, \Box  W_{2}, \cdots, \Box  W_{n} \mbox{ contained in } \mathcal{V}_{l_0},$$
	$$ \Box  G_{1}, \Box  G_{2}, \cdots, \Box  G_{m} \mbox{ contained in }\mathcal D(X)\setminus \mathcal{C}_{0}$$
	such that
	$$\mathcal{U}_{i_1}\subseteq \bigcup\{\Box  W_{k}: 1\leq k\leq n\}\cup\bigcup\{\Box  G_{h}:1\leq h\leq m\}.$$
	
	Note that $\hat{\mathcal{U}}_{i_1}\subseteq\mathcal U_{i_1}$. For each $t\in T_{i_1}$ such that  $\Box U_{i_1, t}\cap\mathcal C_0\neq\emptyset$, by Remark \ref{rm1}, $\Box U_{i_1, t}\subseteq \Box W_k$ for some $1\le k\le n$, or $\Box U_{i_1, t}\subseteq \Box G_h$  for some $1\le h\le m$. But $\Box G_h\cap \mathcal{C}_0=\emptyset$, hence
	$\Box U_{i_1, t}\subseteq \Box W_k\subseteq \mathcal{V}_{l_0}$ for some $k$. It then follows  that
	$$\hat{\mathcal{U}}_{i_1}\subseteq \mathcal{V}_{l_0}.$$
	Therefore $\hat{\mathcal{U}}_{i_1} \ll  \hat{\mathcal{U}}_{i_3}$ holds.
	
	\medskip
	
	For each $i\in I$, let
	$$U_i=\bigcup\{U_{i, t}: \Box U_{i, t}\cap \mathcal{C}_0\not=\emptyset\}.$$
	
	\medskip
	
	\emph{Fact 2: }  If $\hat{\mathcal{U}}_{i_1} \ll \hat{\mathcal{U}}_{i_2}$, then $U_{i_1} \ll  U_{i_2}.$ Hence
	$\{U_i:i\in I\}$ is  a $\ll$-filtered family of open sets in $X$.
	
	As a matter of fact, it is easy to see that if $\{W_j:j\in J\}\subseteq \mathcal{O}(X)$ is a directed open cover of $U_{i_2}$, then
	$\{\Box W_j: j\in J\}$ is a directed open cover of $\hat{\mathcal{U}}_{i_2}$, hence there is $j_0\in J$ such that $\hat{\mathcal{U}}_{i_1}\subseteq \Box W_{j_0}$, thus $U_{i_1}\subseteq W_{j_0}$.
	
	\medskip
	
	Let $$K=\bigcap\{U_{i}: i\in I\}.$$
	By Proposition \ref{prop2}, $K$ is a nonempty saturated compact set, that is $K\in\mathcal{D}(X)$.
	
	\medskip
	
	\emph{ Fact 3: } $K\not\in \mathcal{U}$.
	
	Indeed, if $K\in\mathcal{U}$, then there is an open set $E$ of $X$ such that
	$K\in\Box E\subseteq \mathcal{U}$. Then $K=\bigcap\{U_{i}: i\in I\}\subseteq E$, and since $X$ is open well-filtered, there is $i_0$ such that $U_{i_0}\subseteq E$.
	
	Choose one $U_{i_0, t_0}$ such that $\Box U_{i_0, t_0}\cap \mathcal{C}_0\not=\emptyset$. Then
	$U_{i_0, t_0}\subseteq U_{i_0}\subseteq E$, and it follows that $$\emptyset\neq\Box U_{i_0, t_0}\cap \mathcal{C}_0\subseteq\Box E\cap\mathcal C\subseteq\mathcal U\cap\mathcal C=\emptyset,$$ a contradiction.
	
	\medskip
	
	\emph{Fact 4: } $K\in \bigcap\{\hat{\mathcal{U}}_i: i\in I\}$.
	
	As a matter of fact, if the statement is not true, then there is $i_0\in I$ such that $K\not\in \hat{\mathcal{U}}_{i_0}$.
	By the definition of $\hat{\mathcal{U}}_{i_0}$. Take any $U_{i_0, t_0}$ with $\Box U_{i_0, t_0}\cap \mathcal{C}_0\not=\emptyset$.  Then $K\not\subseteq U_{i_0, t_0}$.
	Take an $e\in K\setminus U_{i_0, t_0}$ and let $\mathcal{F}=\diamondsuit \cl(\{e\})$. Then by Remark \ref{rm1} $\mathcal{F}\cap \Box U_{i_0, t_0}=\emptyset$.

	We show that $\mathcal{C}_0\cap \mathcal{F}\cap \mathcal{U}_i\not=\emptyset$ for all $i\in I$.  For this, it is enough to show that for any $i\in I$, there is a $t_i\in T_{i}$ such that
	$$\Box U_{i, t_i}\cap\mathcal{F}\cap\mathcal{C}_0\not=\emptyset.$$
	If not, there exists $i_{1}\in I$ such that
	
	$$\Box U_{i_1, t}\subseteq (\mathcal{D}(X)\setminus \mathcal{F})\cup (\mathcal{D}(X)\setminus \mathcal{C}_0)$$
	for all $t\in T_{i_1}$.  Choose $i_2$ such that $\hat{\mathcal{U}}_{i_2} \ll \hat{\mathcal{U}}_{i_1}$.
	Then there are
	
	$$ \Box  V^1_{1}, \Box  V^1_{2}, \cdots, \Box  V^1_{m} \mbox{ contained in } \mathcal{D}(X)\setminus \mathcal{C}_0,$$
	and
	$$ \Box  V^2_{1}, \Box  V^2_{2}, \cdots, \Box  V^2_{n} \mbox{ contained in } \mathcal{D}(X)\setminus \mathcal{F}$$
	such that
	$$\hat{\mathcal{U}}_{i_2}\subseteq \bigcup\{ \Box  V^1_{k}: 1\le k\le m\}\cup \bigcup\{ \Box  V^2_{l}: 1\le l\le n\}.$$
	
	By the definition of $K$,	we have that $e\in K\subseteq U_{i_2}$, and then there is $t'\in T_{i_2}$ such that $e\in U_{i_2, t'}$ where $\Box U_{i_2, t'}\cap \mathcal{C}_0\not=\emptyset$. Thus $Sat_X(\{e\})\in\mathcal F\cap \Box U_{i_2, t'}\neq\emptyset$.
	
	Now $\Box U_{i_2, t'}\subseteq \hat{\mathcal{U}}_{i_2}\subseteq \bigcup\{ \Box  V^1_{k}: 1\le k\le m\}\cup \bigcup\{ \Box  V^2_{l}: 1\le l\le n\}$.
	By Remark \ref{re}, we have that
	$$\Box U_{i_2, t'}\subseteq \Box V^1_{k} \mbox{ for some  } 1\leq k\leq m, \mbox{ or }\Box U_{i_2, t'}\subseteq \Box V^2_{l}\mbox{ for some  }1\leq l\leq n.$$
	But for any $k$, since $\Box U_{i_2, t'}\cap\mathcal C_0\neq\emptyset$ and
	$\Box V^{1}_{k}\cap \mathcal C_0=\emptyset$, it follows that $\Box U_{i_2, t'}\nsubseteq \Box V^1_{k}$, and for any $l$, since $\Box U_{i_2, t'}\cap\mathcal F\neq\emptyset$ and $\Box V^2_l\cap \mathcal{F}=\emptyset$, it follows that $\Box U_{i_2, t'}\nsubseteq \Box V^2_{l}$, a contradiction.
	This shows that $\mathcal{C}_0\cap \mathcal{F}\cap \mathcal{U}_i\not=\emptyset$ for all $i\in I$.
	
	By the minimality of $\mathcal{C}_0$, we  have that $\mathcal{C}_0\subseteq \mathcal{F}$.
	Then $\Box U_{i_0, t_0}\cap \mathcal{C}_0\subseteq \Box U_{i_0, t_0}\cap\mathcal{F}$. But, as we pointed out earlier,
	$\Box U_{i_0, t_0}\cap\mathcal{F}=\emptyset$, thus $\Box U_{i_0, t_0}\cap \mathcal{C}_0=\emptyset$. This contradicts that
	$\Box U_{i_0, t_0}\cap \mathcal{C}_0\not=\emptyset$.
	
	All these together show that $K\in \bigcap\{\hat{\mathcal{U}}_i: i\in I\}$.
	
	\medskip
	
	Now, Fact 3 and Fact 4 contradict the assumption
	$$\bigcap\{\mathcal{U}_{i\in I}: i\in I\}\subseteq \mathcal{U}.$$
	
	The proof is thus completed.
\end{proof}

\section{Summary}
In this paper, we mainly considered the preservation of the open well-filteredness by some standardly  constructed spaces from an open well-filtered space.
The table below  summarizes the main results, where ``sp." denotes ``subspaces".

\vspace{0.1cm}
\begin{center}
	
	\begin{tabular}{|c|c|c|c|c|c|}
		\hline
		open sp.&closed sp. &saturated sp. &retract&upper space& product \\
		\hline
		$\checkmark$& $\times$&$\times$ &$\times$ & $\checkmark$&$?$\\
		\hline
	\end{tabular}
\end{center}
\medskip

Note that  in  many other cases, the ground spaces usually  inherit the corresponding properties of their upper spaces as they can be embedded  into the upper spaces under  the principle filter mappings. At the moment, the following problem is still open.

\begin{problem}
	Is it true that a space is open well-filtered if its upper space is open well-filtered?
\end{problem}	
%
%


\bibliographystyle{./entics}

\end{document}